\documentclass[11pt]{amsart}
\usepackage{amsmath,amssymb,amsthm}
\usepackage{hyperref}
\usepackage{tikz}

\newtheorem{theorem}{Theorem}[section]
\newtheorem{lemma}[theorem]{Lemma}
\newtheorem{proposition}[theorem]{Proposition}
\newtheorem{corollary}[theorem]{Corollary}
\theoremstyle{definition}
\newtheorem{definition}[theorem]{Definition}
\newtheorem*{unnumbereddefinition}{Definition}
\newtheorem{remark}[theorem]{Remark}

\newtheorem{question}[theorem]{Question}

\usetikzlibrary{arrows.meta,calc,positioning,fit}

\tikzset{
  >={Latex[length=2.5mm]},
  knot/.style={circle,draw,thick,minimum size=8mm,inner sep=0pt,fill=white},
  surface/.style={draw,thick,fill=gray!12},
  carried/.style={draw=red!70!black,very thick},
  carriedb/.style={draw=blue!70!black,very thick},
  dashedarrow/.style={draw=red!70!black,dashed,->,thick},
  info/.style={font=\small},
}

\title[Amicable Knots on Minimal Genus Seifert Surfaces]{Amicable Knots on Minimal Genus Seifert Surfaces}
\author{Makoto Ozawa}
\address{Department of Natural Sciences, Faculty of Arts and Sciences, Komazawa University, 1-23-1 Komazawa, Setagaya-ku, Tokyo, 154-8525, Japan}
\email{w3c@komazawa-u.ac.jp}
\date{July 15, 2026}
\keywords{knot, minimal genus Seifert surface, amicable knots, tunnel number, Heegaard genus}
\subjclass[2020]{Primary 57K10; Secondary 57K20, 57K30}

\begin{document}

\begin{abstract}
For a knot $K\subset S^3$, let $S(K)$ denote the set of non-trivial knot types represented by
simple closed curves on a minimal genus Seifert surface of $K$.
We study the relation $J\in S(K)$ and its symmetric part, which leads to the notion of
\emph{amicable knots}: knots $K$ and $J$ are called amicable if each is represented by a
simple closed curve on a minimal genus Seifert surface of the other.

A classical result of Lyon implies that the family of torus knots is universal for this
realization problem: for every non-trivial knot type $J$, there exists a torus knot $T$
such that $J\in S(T)$.
In contrast, one of the main results of this paper is that no single knot is universal:
for every knot $K$, there exists a knot $J$ such that $J\notin S(K)$.

We also study explicit examples, keeping track of chirality throughout.
Writing $3_1^+=T(2,3)$ and $8_{19}^+=T(3,4)$ for the right-handed positive torus knots,
we show that $3_1^+$ and $8_{19}^+$ are amicable, whereas $3_1^+$ and the figure-eight
knot $4_1$ are not.  We also describe the hosting sets of both chiralities of the trefoil
in terms of primitive slope classes on their once-punctured torus fibers.
\end{abstract}

\maketitle

\section{Introduction}

Seifert surfaces are among the most basic objects in knot theory.
Minimal genus Seifert surfaces are especially important, since they reflect both the genus
of the knot and the topology of its complement.
In this paper, we study the following elementary but apparently unexplored question:
which knot types can be represented by simple closed curves on a minimal genus Seifert
surface of a given knot?

For a knot $K$, let $S(K)$ denote the set of non-trivial knot types represented by simple
closed curves on some minimal genus Seifert surface of $K$.
Thus, if $J\in S(K)$, then $J$ is realizable on a minimal genus Seifert surface of $K$.
The symmetric part of this relation is the main object of the paper.
We say that two knots $K$ and $J$ are \emph{amicable} if
\[
J\in S(K)\quad\text{and}\quad K\in S(J).
\]
Since $K\in S(K)$ for every knot $K$, the interesting question is the existence of
\emph{proper} amicable knots.

\begin{question}\label{que:proper-amicable}
Does every knot have a proper amicable knot?
\end{question}

\begin{question}\label{que:amicability-connected}
Is the graph of proper amicable pairs connected?
\end{question}

For chirality-sensitive examples, we use the notation
\[
3_1^+=T(2,3),\qquad 3_1^-=\overline{T(2,3)},
\qquad
8_{19}^+=T(3,4),\qquad 8_{19}^-=\overline{T(3,4)}.
\]
Thus the superscript records the handedness, and $T(p,q)$ with $p,q>0$ denotes the
right-handed positive torus knot.  The figure-eight knot $4_1$ is amphichiral, so no
superscript is needed for it.

A classical theorem of Lyon \cite{Lyon1980} shows that torus knots play a distinguished role:
for every non-trivial knot $J$, there exists a torus knot $T$ such that $J\in S(T)$.
Thus the family of torus knots is universal for the realization problem.
One of the main results of this paper is that this phenomenon is genuinely collective.

\begin{theorem}\label{thm:no-universal-host}
For every knot $K$, there exists a knot $J$ such that $J\notin S(K)$.
In particular, no knot is universal for the realization problem on minimal genus Seifert surfaces.
\end{theorem}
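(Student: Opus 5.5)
The plan is to find a knot invariant that is bounded, uniformly in terms of $K$, over every knot type in $S(K)$, and then to take $J$ with a larger value of that invariant. Genus will not work, since twisting produces curves of arbitrarily large genus on a fixed surface. The natural choice, suggested by the keywords, is the \emph{tunnel number}, or equivalently the Heegaard genus of the exterior. So the goal is a constant $M(K)$ with $t(J)\le M(K)$ for all $J\in S(K)$. The theorem then follows by taking $J$ to be a connected sum of $M(K)+1$ nontrivial knots, for instance trefoils, since tunnel number is at least the number of prime summands minus one by the known lower bounds for connected sums (e.g.\ Scharlemann--Schultens).

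For the bound, let $F$ be a minimal genus Seifert surface of $K$ of genus $g$, and let $c\subset F$ be a simple closed curve. Push $c$ into $F\times\{1\}\subset N(F)=F\times[0,1]$. Then
\[
S^3 = N(F)\cup W,\qquad W=\overline{S^3\setminus N(F)},
\]
and the exterior of $c$ is $\bigl(N(F)\setminus N(c)\bigr)\cup W$. The piece $N(F)\setminus N(c)$ is a genus $2g$ handlebody with a curve-neighbourhood removed from its boundary, so it is still a handlebody, or at worst a compression body of bounded genus. I would then build a Heegaard splitting of $E(c)$ by amalgamating a Heegaard splitting of $W$ with this piece along the gluing surface. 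This should give an estimate of the form
\[
g(E(c)) \le g(W) + 2g + C
\]
for a universal constant $C$. The one point to check carefully is that the gluing along $\partial N(F)$, which is a closed surface of genus $2g$ containing the pushed-off $c$, is compatible with amalgamation; I would handle this by regarding $W$ as a sutured manifold or compression-body decomposition.

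The main obstacle is making $g(W)$ bounded independently of the choice of $F$, since $K$ may have infinitely many minimal genus Seifert surfaces up to isotopy. My first attempt is to invoke finiteness of minimal genus Seifert surfaces up to homeomorphism of the pair $(E(K),F)$. For atoroidal exteriors this is a result of Wilson; for the general case I would pass through the JSJ decomposition, where the Seifert fibred and satellite pieces contribute only finitely many complementary types. This yields finitely many homeomorphism types of $W$, and hence a uniform bound on $g(W)$.

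If that route fails, the fallback is to start from a fixed minimal Heegaard splitting of $E(K)$ of genus $h$. Because $F$ is incompressible, I would put the splitting in thin position relative to $F$ and bound the Heegaard genus of $E(K)$ cut along $F$ by a function of $h$ and $g$. Either route gives $M(K)=\max g(W)+2g+C$, which finishes the proof.
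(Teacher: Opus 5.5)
Your strategy is the paper's. You bound the tunnel number of every knot carried by a minimal genus Seifert surface $F$ of $K$ in terms of the Heegaard genus of $E(F)=\operatorname{cl}(S^3\setminus N(F))$, make this uniform over all such $F$ by a Wilson-type finiteness statement, and finish with Scharlemann--Schultens. However, the two steps you leave as promissory notes are exactly where the proof lives, and neither sketch is yet an argument.

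The first is the amalgamation. Once $c\subset F\times\{1\}\subset\partial N(F)$ is drilled out, the pieces $N(F)\setminus N(c)$ and $W\setminus N(c)$ are glued along $\partial N(F)$ minus an annulus. That is a surface with boundary, so ordinary amalgamation does not apply, and ``regard $W$ as a sutured manifold'' does not say what to do. The paper never drills. It takes a minimal genus splitting $E(F)=V\cup_\Sigma W'$ with $\partial_-V=\partial N(F)$ and observes that $N(F)\cup V$ is a handlebody, so $\Sigma$ is a Heegaard surface of $S^3$. It then pushes $c$ across the product part of $V$ onto $\Sigma$, and uses the fact that a knot lying on a genus-$h$ Heegaard surface has tunnel number at most $h$. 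This gives $t(c)\le g_H(E(F))$ directly.

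The second is the uniformity over all minimal genus surfaces. Wilson's theorem is not restricted to atoroidal exteriors. For every knot it provides finitely many incompressible Seifert surfaces $S_i$ and closed incompressible surfaces $Q_j$ such that every incompressible Seifert surface is isotopic to a Haken sum $S_i+\sum_j a_jQ_j$. Your JSJ sentence asserts the needed finiteness of complementary types without giving a mechanism. The missing idea has two parts:
\begin{itemize}
\item For a minimal genus surface, additivity of Euler characteristic under Haken sum leaves only torus summands.
\item A Haken sum with an incompressible torus is effected by a twist (spinning) homeomorphism of $E(K)$, supported in a product neighborhood of the torus and fixing $\partial E(K)$.
\end{itemize}
Such a homeomorphism extends over $N(K)$ and carries $E(F_i)$ homeomorphically onto $E(F)$. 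Hence $g_H(E(F))$ takes only finitely many values, and your thin-position fallback is unnecessary.

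Finally, there is a small slip in the last step. With the bound $t\ge n-1$, a sum of $M(K)+1$ knots only gives $t(J)\ge M(K)$, which does not exceed $M(K)$. Scharlemann--Schultens actually prove $t\ge n$; alternatively, simply take more summands.
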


We also study explicit examples.
The trefoil has a unique minimal genus Seifert surface up to isotopy, and this surface is
a once-punctured torus.  We describe the hosting sets of $3_1^-$ and $3_1^+$ in terms of
primitive slope classes.  We then use a diagrammatic construction on the trefoil fiber to
show
\[
8_{19}^+\in S(3_1^+),
\]
and use the torus-knot inclusion proved later to obtain the reverse relation.
We also prove
\[
4_1\notin S(3_1^+),
\qquad
3_1^+\in S(4_1),
\]
so realizability need not be symmetric.

\begin{figure}[tb]
\centering
\begin{tikzpicture}[
    >=Latex,
    every node/.style={font=\small},
    knot/.style={draw, circle, minimum size=11mm, inner sep=1pt},
]

\node[knot] (T23) at (0,0) {$T(2,3)$};
\node[knot] (Tpq) at (3.8,0) {$T(p,q)$};
\node[knot] (J)   at (8.3,0) {$J$};

\draw[->, thick] (T23) -- (Tpq);
\draw[->, thick] (Tpq) -- (J);

\node[align=center] at ($(T23)!0.5!(Tpq)+(0,-1.1)$)
  {$S(T(2,3))\subset S(T(p,q))$\\
   \text{for suitable }(p,q)};

\node[align=center] at ($(Tpq)!0.55!(J)+(0.6,1.0)$)
{$J\in S(T(p,q))$\\for some $(p,q)$};

\node[
    draw,
    rounded corners,
    fit=(T23)(Tpq),
    inner xsep=10mm,
    inner ysep=10mm
] (box) {};

\node[above=1mm of box] {torus knots};

\end{tikzpicture}

\caption{A universal family versus the nonexistence of a universal knot.
Lyon's theorem says that every non-trivial knot is realized on some torus knot,
whereas Theorem~\ref{thm:no-universal-host} shows that no single knot realizes all knots.}
\label{fig:universal-family}
\end{figure}

The paper is organized as follows.
In Section~\ref{sec:definitions}, we define $S(K)$ and amicability.
In Section~\ref{sec:universal-hosting}, we recall Lyon's theorem and show that no single
knot is universal.
In Section~\ref{sec:trefoil}, we describe the hosting sets of the two trefoils and give a
careful realization of $T(3,4)$ on the fiber of $T(2,3)$.
In Section~\ref{sec:amicability-examples}, we prove a monotonicity property for torus knots
and give explicit amicable and non-amicable pairs.
We end with two questions motivated by these examples.

\section{Knots Realizable on Minimal Genus Seifert Surfaces}\label{sec:definitions}

In this section, we introduce the basic notions used throughout the paper.
All knots are considered in $S^3$ up to ambient isotopy, and $\mathcal{K}$ denotes
the set of all non-trivial knot types in $S^3$.

\subsection{Knots on minimal genus Seifert surfaces}

We begin by defining the set of knot types carried by minimal genus Seifert surfaces.

\begin{definition}\label{def:SK}
For a knot $K\in \mathcal{K}$, let $S(K)$ denote the set of all non-trivial knot types
represented by simple closed curves on some minimal genus Seifert surface of $K$.
Equivalently,
$J\in S(K)$
if and only if there exist a minimal genus Seifert surface $F$ for $K$ and a simple
closed curve $c\subset F$ such that $c$ represents the knot type $J$ in $S^3$.
\end{definition}

Thus we obtain a set-valued map \(S:\mathcal{K}\to 2^{\mathcal{K}}\).

\begin{remark}
We do not require the curve $c\subset F$ to be essential in $F$.
Hence inessential curves are allowed, and in particular the trivial knot can occur on
every minimal genus Seifert surface.
To avoid this ubiquitous but uninformative case, we restrict $S(K)$ to its non-trivial part.
Thus $S(K)\subset \mathcal{K}$.
\end{remark}

\begin{remark}
By definition, $S(K)$ depends only on the knot type of $K$.
Indeed, if $K$ and $K'$ are ambient isotopic, then any ambient isotopy carrying $K$ to $K'$
also carries minimal genus Seifert surfaces of $K$ to minimal genus Seifert surfaces of $K'$,
preserving the knot types represented by simple closed curves on them.
\end{remark}

\begin{proposition}\label{prop:reflexive}
For every knot \(K\in\mathcal K\), we have \(K\in S(K)\).
\end{proposition}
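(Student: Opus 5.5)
The approach is to take a minimal genus Seifert surface $F$ of $K$ and push a copy of its boundary slightly into the interior of $F$. Concretely, I would proceed as follows.

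First, fix any minimal genus Seifert surface $F$ for $K$; one exists since every knot bounds a Seifert surface (Seifert's algorithm) and the genus takes values in the nonnegative integers. Since $F$ is a compact orientable surface with boundary $\partial F = K$, a neighborhood of the boundary is a collar $\partial F\times[0,1]\subset F$ with $\partial F\times\{0\}=K$. Let $c=\partial F\times\{1/2\}$. This is a simple closed curve in the interior of $F$, so it lies on $F$ in the sense of Definition~\ref{def:SK}.

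Second, I would show that $c$ is ambient isotopic to $K$ in $S^3$. The collar gives an embedded annulus $A=\partial F\times[0,1/2]$ in $S^3$ whose boundary components are $K$ and $c$. Sliding along the collar coordinate $t\in[0,1/2]$ gives an isotopy of embeddings $K\times\{t\}$ from $K$ to $c$. By the isotopy extension theorem, applied in the smooth or PL category consistent with the paper's conventions, this extends to an ambient isotopy. Alternatively, one can observe directly that $c$ is a parallel (longitudinal) copy of $K$, namely the Seifert longitude, and a longitude on $\partial N(K)$ is isotopic to the core $K$ through the solid torus $N(K)$.

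Third, since $K\in\mathcal K$ is non-trivial, the knot type of $c$ equals that of $K$ and is non-trivial, so $K\in S(K)$.

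There is no serious obstacle. The only point needing care is that the statement allows curves on $F$ which need not lie in its interior. One could simply take $c=\partial F=K$ itself, if the boundary counts as a curve on $F$; the pushed-off curve avoids any ambiguity there.
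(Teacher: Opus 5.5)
Your proposal is correct and matches the paper's proof: choose a minimal genus Seifert surface $F$ and a simple closed curve on $F$ parallel to $\partial F$, which has the knot type of $K$. Your collar and isotopy-extension argument spells out what the paper asserts in one line.
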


\begin{proof}
Let $F$ be a minimal genus Seifert surface for $K$.
A simple closed curve on $F$ parallel to $\partial F$ represents the same knot type as $K$ in $S^3$.
Hence $K\in S(K)$.
\end{proof}

\begin{proposition}[Mirror equivariance]\label{prop:mirror-equivariance}
For every knot $K$,
\[
S(\overline K)=\{\overline J\mid J\in S(K)\}.
\]
\end{proposition}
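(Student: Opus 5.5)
We will prove the equality by applying a single orientation-reversing homeomorphism of $S^3$ and checking each inclusion. Fix a reflection $r\colon S^3\to S^3$. For any knot $L$, the image $r(L)$ represents $\overline L$. The plan has three steps.

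\emph{Step 1: $r$ gives a bijection between minimal genus Seifert surfaces of $K$ and those of $\overline K$.} If $F$ is a Seifert surface for $K$, then $r(F)$ is a compact, connected, orientable surface embedded in $S^3$ with boundary $r(K)$. It is therefore a Seifert surface for $\overline K$, of the same genus. Since $r^{-1}=r$ is also a homeomorphism, the same argument applied in reverse shows that $g(\overline K)=g(K)$. Hence $r$ carries minimal genus Seifert surfaces of $K$ bijectively onto minimal genus Seifert surfaces of $r(K)$. By the remark above that $S(K)$ depends only on knot type, we may compute $S(\overline K)$ using the representative $r(K)$.

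\emph{Step 2: the inclusion $\supseteq$.} Let $J\in S(K)$. Then there are a minimal genus Seifert surface $F$ of $K$ and a simple closed curve $c\subset F$ representing $J$. The curve $r(c)$ is a simple closed curve on the minimal genus surface $r(F)$ of $r(K)$, and it represents $\overline J$. Since $J$ is non-trivial, so is $\overline J$, because the mirror of the unknot is the unknot. Therefore $\overline J\in S(\overline K)$.

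\emph{Step 3: the inclusion $\subseteq$.} Apply Step 2 to $\overline K$ in place of $K$, using $\overline{\overline K}=K$. This gives $\overline{J'}\in S(K)$ for every $J'\in S(\overline K)$. Writing $J'=\overline{\overline{J'}}$ then yields the reverse inclusion.

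The only point needing care is Step 1, namely that $r$ preserves minimality of genus. This follows at once from the symmetry of the argument under $r^{-1}$. No genuine obstacle is expected, apart from being careful that orientation of the Seifert surface plays no role, because $r(F)$ is still orientable.
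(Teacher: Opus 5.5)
Your proposal is correct and follows the paper's own argument: apply a reflection to carry minimal genus Seifert surfaces of $K$, and curves on them, to minimal genus Seifert surfaces of $\overline K$ and mirrored curves, then apply the same reflection again (equivalently, run the argument for $\overline K$) to get the reverse inclusion. You also spell out two points the paper leaves implicit: that genus is preserved, and that non-triviality of $J$ passes to $\overline J$.
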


\begin{proof}
Reflection of $S^3$ carries a minimal genus Seifert surface of $K$ to a minimal genus
Seifert surface of $\overline K$ and carries every simple closed curve on the former to
the mirror of the corresponding knot on the latter.  Applying the same reflection twice
gives the reverse inclusion.
\end{proof}

\begin{proposition}\label{prop:connected-sum-hosting}
If \(J\in S(K)\) and \(J'\in S(K')\), then \(J\#J'\in S(K\#K')\).
\end{proposition}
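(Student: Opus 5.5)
The plan is to use boundary connected sum of Seifert surfaces, together with the additivity of genus under connected sum. Let $F$ be a minimal genus Seifert surface of $K$ containing a simple closed curve $c$ of type $J$. Let $F'$ be a minimal genus Seifert surface of $K'$ containing a curve $c'$ of type $J'$. Place $F$ and $F'$ in disjoint 3-balls $B$ and $B'$ separated by a 2-sphere $\Sigma$. Join them by a band $b$ crossing $\Sigma$ once and attached to $\partial F$ and $\partial F'$. The result $F\natural F'=F\cup b\cup F'$ is a Seifert surface of $K\#K'$. Its genus is $g(F)+g(F')=g(K)+g(K')$, which equals $g(K\#K')$ by the additivity of genus (Schubert). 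So $F\natural F'$ is a minimal genus Seifert surface of $K\#K'$.

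Next, construct the curve. We may assume $c$ and $c'$ are disjoint from small neighbourhoods of the band's attaching arcs. Choose an arc $\alpha\subset F$ running from a point of $c$ to the attaching arc of $b$, with interior disjoint from $c$; such an arc exists because $F$ is connected. Choose $\alpha'\subset F'$ similarly. Let $\beta$ be the core of $b$. Then $\gamma=\alpha\cup\beta\cup\alpha'$ is an embedded arc in $F\natural F'$ joining $c$ to $c'$. Band-summing $c$ and $c'$ along a thin regular neighbourhood of $\gamma$ inside the surface gives a simple closed curve $c''\subset F\natural F'$.

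The main step is to identify the knot type of $c''$ in $S^3$. Outside a neighbourhood of $\beta$, the curve $c''$ agrees with $c$ (plus a parallel pair following $\alpha$) in $B$, and with $c'$ (plus a parallel pair following $\alpha'$) in $B'$. It crosses $\Sigma$ in exactly two points, along the two parallel copies of $\beta$. The pair $(B, c''\cap B)$ is obtained from $c$ by cutting at the base of $\alpha$ and dragging the two new endpoints along $\alpha$ to $\partial B$. Pushing $\alpha$ radially to $\partial B$ moves the curve by an isotopy that is the identity away from a neighbourhood of $\alpha$. Hence $(B, c''\cap B)$ is the tangle $(B,c)$ with a trivial unknotted arc removed, i.e. a knotted ball-arc pair of type $J$. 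The same argument shows $(B', c''\cap B')$ is a ball-arc pair of type $J'$. A sphere meeting a knot in two points that splits it into ball-arc pairs of types $J$ and $J'$ exhibits the knot as $J\#J'$. One subtlety: $\alpha$ may wind around the surface and link $c$ in $S^3$. This does not matter, because the arc $c\cap B$ is determined up to isotopy rel boundary by $c$ together with an unknotted arc to the boundary. Any arc in $B\setminus c$ from $c$ to $\partial B$ yields the same knot type once capped off, since all such choices differ by sliding the endpoint along $c$ and by crossing changes of $\alpha$ with itself, neither of which changes the capped knot.

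Finally, $J\#J'$ is non-trivial because $J$ and $J'$ are non-trivial and genus is additive. Therefore $J\#J'\in S(K\#K')$. I expect the main obstacle to be the careful justification in the previous paragraph. It needs the identification of the tangle $(B,c''\cap B)$ with $J$ minus a trivial arc, and the observation that orientations need no consistency check, because connected sum of knot types here is taken with whatever orientations are induced. If orientations matter for chiral, non-invertible knots, one can reverse $c'$ by choosing the band $b$ with the appropriate attaching direction.
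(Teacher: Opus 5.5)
Your proposal is correct and is essentially the paper's proof: boundary connected sum, Schubert additivity, and the band sum of $c$ and $c'$ inside the surface along an arc through the connecting band. Your separating sphere $\Sigma$ with its two ball--arc pairs actually makes the identification of $c''$ as $J\#J'$ more explicit than the paper's ``small standard $3$-ball'' assertion. Two side remarks need correcting, though the argument does not depend on them. First, the reason in your ``subtlety'' paragraph is wrong, since in general two arcs from $c$ to $\partial B$ in $B\setminus c$ cannot be related without passing $\alpha$ across $c$; it is also unneeded, because the strip of $F\cup b$ along $\alpha\cup(\beta\cap B)$ is an embedded disk meeting $c$ only in a short arc, so the finger retraction is an isotopy however $\alpha$ winds. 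Second, re-attaching $b$ the other way does not reverse $c'$ alone: it either destroys orientability or reverses $F'$, which turns the boundary into $K\#\mathrm{rev}(K')$. The relative orientation of $c$ and $c'$ is governed instead by the side from which $\alpha$ and $\alpha'$ leave them, and that side is free only for non-separating curves; the paper does not track orientations either.
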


\begin{proof}
Choose minimal genus Seifert surfaces $F$ for $K$ and $F'$ for $K'$ such that
$J\subset \operatorname{int}(F)$ and $J'\subset \operatorname{int}(F')$ are represented by simple closed curves.
Form the boundary connected sum $F\natural F'$ by attaching a band
\[
b\cong I\times I
\]
between small boundary arcs of $F$ and $F'$, chosen disjoint from $J\cup J'$.
Then $F\natural F'$ is a Seifert surface for $K\#K'$, and by Schubert's
additivity theorem for knot genus under connected sum
\cite[\S~5, Satz~2, p.~156]{Schubert1953}, it is a minimal genus Seifert
surface for $K\#K'$.

Choose embedded arcs $\lambda\subset F$ and $\lambda'\subset F'$ such that
$\lambda\cap J$ and $\lambda'\cap J'$ consist of one endpoint each, the other endpoints of
$\lambda$ and $\lambda'$ lie on the attaching arcs of the band $b$, and the interiors of
$\lambda$ and $\lambda'$ are disjoint from $J\cup J'$.
Let $A\subset F\natural F'$ be a narrow regular neighborhood in the surface of
\[
J\cup \lambda \cup b \cup \lambda' \cup J'.
\]
Since $F\natural F'$ is orientable, $A$ is an embedded pair of pants whose two boundary
components are parallel to $J$ and $J'$, and whose third boundary component $c$ is a
simple closed curve on $F\natural F'$.

The arcs $\lambda$, $\lambda'$ and the band $b$ may be chosen inside a small $3$-ball that
meets $J$ and $J'$ in trivial boundary-parallel subarcs. Since this $3$-ball is standard and
is disjoint from the rest of $J\cup J'$, the band $b$ is unknotted in the usual sense.
Therefore the third boundary component $c$ is exactly the standard band sum of $J$ and $J'$
along an unknotted band, and hence represents the connected sum $J\#J'$ in $S^3$.
Thus $J\#J'\in S(K\#K')$.
\end{proof}

\begin{definition}[Realization and amicability]\label{def:hosting-amicability}
For knots \(K,J\in\mathcal K\), we write
\[
K\to J
\]
if \(J\in S(K)\). When this holds, we say that \(K\) \emph{hosts} \(J\),
or equivalently that \(J\) is realizable on a minimal genus Seifert surface of \(K\).
We write
\[
K\leftrightarrow J
\]
if both \(K\to J\) and \(J\to K\) hold, and in this case we say that
\(K\) and \(J\) are \emph{amicable}. A knot \(J\) is a \emph{proper amicable knot}
of \(K\) if, in addition, \(J\neq K\). We use \(K\nrightarrow J\) and
\(K\not\leftrightarrow J\) for the corresponding negations.
\end{definition}

\begin{corollary}\label{cor:connected-sum-amicability}
If \(K\leftrightarrow J\) and \(K'\leftrightarrow J'\), then \(K\#K'\leftrightarrow J\#J'\).
\end{corollary}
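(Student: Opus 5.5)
The corollary should follow directly from Proposition~\ref{prop:connected-sum-hosting}, applied twice, once in each direction of the amicability relation. By Definition~\ref{def:hosting-amicability}, the hypothesis $K\leftrightarrow J$ unpacks into the two statements $J\in S(K)$ and $K\in S(J)$. Likewise, $K'\leftrightarrow J'$ unpacks into $J'\in S(K')$ and $K'\in S(J')$. To conclude $K\#K'\leftrightarrow J\#J'$, we must show both $J\#J'\in S(K\#K')$ and $K\#K'\in S(J\#J')$.

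For the first inclusion, apply Proposition~\ref{prop:connected-sum-hosting} to the pair $J\in S(K)$ and $J'\in S(K')$; it gives $J\#J'\in S(K\#K')$ directly. For the second, apply the same proposition with the roles of the knots swapped, using $K\in S(J)$ and $K'\in S(J')$. This yields $K\#K'\in S(J\#J')$.

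Together these two inclusions are exactly $K\#K'\leftrightarrow J\#J'$.

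The only point that needs care is that the connected sums are well-defined knot types. If orientations matter, for instance with non-invertible summands, we should check that the band sum in the proof of Proposition~\ref{prop:connected-sum-hosting} produces the connected sum with the same orientation conventions on both sides. The boundary orientations of $F$ and $F'$ induce orientations on curves parallel to $J$ and $J'$ compatibly, so the proposition should be read with consistent conventions in both applications.

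With that convention fixed, there is no real obstacle: the corollary is a formal consequence of the proposition and the symmetry of the definition.
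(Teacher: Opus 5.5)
Your proof is the same as the paper's: unpack $K\leftrightarrow J$ and $K'\leftrightarrow J'$ into four hosting relations and apply Proposition~\ref{prop:connected-sum-hosting} once in each direction. The orientation caveat you raise goes beyond what the paper addresses, since it treats knot types as unoriented and takes $\#$ as well defined. Your one-line fix does not actually settle that caveat: the two applications use different surfaces, and nothing in the argument forces them to pick the same choice of $K\#K'$ and of $J\#J'$ when the summands are non-invertible. At the level of the paper's conventions, though, the deduction is purely formal, as you say.
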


\begin{proof}
This follows immediately from Proposition~\ref{prop:connected-sum-hosting}.
\end{proof}

\begin{corollary}\label{cor:stabilized-amicability}
If \(K\leftrightarrow J\), then for every knot \(L\) one has
\[
K\#L \leftrightarrow J\#L.
\]
\end{corollary}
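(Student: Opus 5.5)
The corollary will follow from Corollary~\ref{cor:connected-sum-amicability} applied with $K'=J'=L$. For that I only need to know that $L\leftrightarrow L$ for every knot $L$, which is just reflexivity of hosting used in both directions.

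First, by Proposition~\ref{prop:reflexive}, $L\in S(L)$, that is, $L\to L$. Since the two conditions defining $L\leftrightarrow L$ in Definition~\ref{def:hosting-amicability} are both the statement $L\to L$, we get $L\leftrightarrow L$. Second, the hypothesis gives $K\leftrightarrow J$. Corollary~\ref{cor:connected-sum-amicability} applied to the pairs $(K,J)$ and $(L,L)$ then yields $K\#L\leftrightarrow J\#L$.

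Equivalently, one can use Proposition~\ref{prop:connected-sum-hosting} directly in each direction. From $J\in S(K)$ and $L\in S(L)$ it gives $J\#L\in S(K\#L)$. From $K\in S(J)$ and $L\in S(L)$ it gives $K\#L\in S(J\#L)$.

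There is essentially no obstacle here. The only point worth checking is that the trivial knot causes no trouble, since $S(\cdot)$ consists of non-trivial knot types. If $L$ is trivial, then $K\#L=K$ and $J\#L=J$, and the statement is just the hypothesis. Otherwise $L\in\mathcal K$, and Proposition~\ref{prop:reflexive} applies as stated. In the reverse direction, $K$ and $J$ are non-trivial by hypothesis, so $K\#L$ and $J\#L$ are non-trivial and lie in $\mathcal K$.
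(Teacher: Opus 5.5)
Your proof is correct and is exactly the paper's argument: reflexivity (Proposition~\ref{prop:reflexive}) gives $L\leftrightarrow L$, and Corollary~\ref{cor:connected-sum-amicability} applied with $K'=J'=L$ gives the result. Your separate treatment of a trivial $L$ is a small improvement. The paper applies Proposition~\ref{prop:reflexive}, which is stated only for $L\in\mathcal K$, without addressing that case.
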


\begin{proof}
Since \(L\in S(L)\) by Proposition~\ref{prop:reflexive}, the knot \(L\) is amicable to itself.
Now apply Corollary~\ref{cor:connected-sum-amicability} with \(K'=J'=L\).
\end{proof}

\begin{remark}
From the viewpoint of the amicability graph, Corollaries~\ref{cor:connected-sum-amicability}
and~\ref{cor:stabilized-amicability} show that connected sum preserves edges and produces
infinite families from any single proper amicable pair.
Thus once one nontrivial edge is known, one immediately obtains many others by taking
connected sums with arbitrary knots.
This observation may be relevant to Question~\ref{que:amicability-connected}, although it does not by itself imply that
the amicability graph is connected.
\end{remark}

\subsection{The realization relation and amicable knots}

Definition~\ref{def:hosting-amicability} makes $S(K)$ into a directed relation on knots.

\begin{remark}
In general, the relation $K\to J$ is not symmetric.
Thus realizability on minimal genus Seifert surfaces should be viewed as a genuinely directed relation on $\mathcal{K}$,
while amicability is its symmetric part.
\end{remark}

\begin{remark}
Amicability is symmetric by definition, but there is no reason for it to be transitive.
Hence amicability is not, in general, an equivalence relation.
\end{remark}

\begin{remark}
Since $K\in S(K)$ for every knot $K$, each knot is amicable to itself.
Thus the existence of an amicable knot is trivial unless one asks for a proper one.
\end{remark}

\begin{definition}\label{def:amicability-graph}
The \emph{amicability graph} is the undirected graph whose vertex set is $\mathcal{K}$,
and where two distinct vertices $K$ and $J$ are joined by an edge if and only if \(J\in S(K)\) and \(K\in S(J)\).
\end{definition}

\begin{remark}
By excluding the case $K=J$ in the definition of the amicability graph, we regard the graph
as recording non-trivial mutual realizability.
The reflexive relation $K\in S(K)$ is still present at the level of the relation itself,
but is not drawn as a loop in the graph.
\end{remark}

\section{Universal Families and the Nonexistence of a Universal Knot}\label{sec:universal-hosting}

In this section, we study universal phenomena for realization on minimal genus Seifert surfaces.
We first recall Lyon's theorem, which implies that torus knots form a universal host family.
We then show that this universality is genuinely collective:
although every non-trivial knot is hosted by some torus knot, no single knot realizes all knot types.
The proof uses a uniform complexity bound for knots hosted by a fixed knot, expressed in terms
of a Heegaard-surface invariant and tunnel number.

\subsection{Lyon's theorem and torus knots}

A classical theorem of Lyon shows that torus knots play a distinguished role in this realization problem.

\begin{theorem}[Lyon]\label{thm:lyon}
For every non-trivial knot $J$ in $S^3$, there exists a torus knot $T$ such that \(J\in S(T)\).
\end{theorem}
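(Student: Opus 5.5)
The plan is to reduce Lyon's theorem to the fact that every knot lies on a fiber surface of a torus knot, and that the fiber of a fibered knot is its unique minimal genus Seifert surface. First I would put $J$ in braid-closure form, as the closure of a braid $\beta$ on $p$ strands. Then I would embed a suitable conjugate of $\beta$ into a positive braid whose closure is a torus knot. The natural host is the Milnor fiber of the singularity $x^p=y^q$. Equivalently, it is the standard Seifert surface obtained from the torus knot $T(p,q)$ by Seifert's algorithm applied to the positive braid $(\sigma_1\cdots\sigma_{p-1})^q$: $p$ disks joined by $(p-1)q$ half-twisted bands. Because $T(p,q)$ is fibered and Seifert's algorithm on a positive braid diagram yields a fiber surface, this surface $F_{p,q}$ is a minimal genus Seifert surface. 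Here I use the classical facts that positive braid closures are fibered with the Seifert-algorithm surface as fiber, and that the genus is $(p-1)(q-1)/2$.

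The key step is to find a simple closed curve on $F_{p,q}$ that is isotopic to $J$. Each band of $F_{p,q}$ corresponds to a letter $\sigma_i$. A curve running along the disks and through bands traces out a braid word in the $\sigma_i$. I would use a different device, an embedded annulus: the full twist $\Delta^2=(\sigma_1\cdots\sigma_{p-1})^p$ lies as a subword of $(\sigma_1\cdots\sigma_{p-1})^q$ for $q\ge p$. Using the Garside normal form, write any braid as $\Delta^{-2k}P$ with $P$ a positive word. Multiplying by a sufficiently high power of the full twist, which is central, shows that $\beta\Delta^{2m}$ is a positive braid for large $m$. The idea is then to realize $J$ as a curve on $F_{p,q}$ that winds once around the core annuli with a controlled pattern. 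Lyon's original argument instead constructs, inside the fiber, an unknotted-annulus-type curve together with a sequence of Hopf plumbings.

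Concretely, I would follow this route. Every knot $J$ is a closed curve on the fiber of some positive braid closure: take a diagram of $J$, and realize each crossing of either sign as a curve passing over or under a positive band, using two parallel sheets. This places $J$ on the Seifert surface of a positive braid link $L$. Then $L$'s fiber, being a Hopf-plumbed surface, embeds as a subsurface of the fiber of a torus link $T(n,N)$ for $N$ large. This holds because every positive braid word is a subword of $(\sigma_1\cdots\sigma_{n-1})^N$, and deleting bands corresponds to taking incompressible subsurfaces. Finally, I would adjust $(n,N)$ to be coprime so that the result is a torus knot, still containing the subword.

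The main obstacle is the middle step: realizing negative crossings of $J$ as a simple closed curve on a positive-braid fiber. A curve crossing a single positive band can only contribute positive twisting. So I would use the trick that a curve traversing a region of the surface as a "clasp" passes through two bands in opposite directions, which yields either crossing sign. I expect checking the embeddedness of this curve to be the delicate point.
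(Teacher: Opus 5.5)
The outer layer of your reduction is fine. A positive braid word on $n$ strands is a subsequence of $(\sigma_1\cdots\sigma_{n-1})^N$ for large $N$. Deleting letters deletes bands, so the smaller Seifert-algorithm surface is literally a subset of the larger one; this is the same mechanism as in Proposition~\ref{prop:torus-inclusion}. Enlarging $N$ to be prime to $n$ keeps the old word as a prefix, and torus-knot fibers are minimal genus.

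The gap is the step that carries all the content: that every knot $J$ is a simple closed curve on the Seifert-algorithm surface of some positive braid closure. You only gesture at it (``realize each crossing of either sign \dots\ using two parallel sheets'', the ``clasp'' trick). You never specify the surface or the curve, or show why the curve is embedded and isotopic to $J$, as you concede yourself. Since torus-knot fibers are themselves positive-braid surfaces, this claim is \emph{equivalent} to Lyon's theorem given your outer layer, so as written the proposal reduces the statement to itself. The Garside paragraph does not supply the step either: $\beta\Delta^{2m}$ is positive, but its closure is a different knot, and nothing turns it into a curve of type $J$ on $F_{p,q}$.

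The missing step is also not local bookkeeping. An annular neighborhood of an essential curve on a positive torus-link fiber is a full subsurface, hence a quasipositive annulus. By Rudolph's theorem on quasipositive annuli, its framing is at most the maximal Thurston--Bennequin number $TB(J)$; for instance it is at most $-3$ for $4_1$. So any crossing-by-crossing construction from an arbitrary diagram must also build in enough negative twisting, which is exactly what the cusps of a Legendrian front provide.

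One way to close the gap runs along these lines:
\begin{enumerate}
\item The annulus with core $J$ and framing $n\le TB(J)$ (e.g.\ the ribbon of a Legendrian representative) is quasipositive by Rudolph.
\item Hence, by Rudolph's characterization, it is a full subsurface of the fiber of some positive torus link $T(n,m)$.
\item Your band-adding step then passes to a torus knot $T(n,m')$ with $\gcd(n,m')=1$.
\end{enumerate}
Otherwise you must simply invoke Lyon's theorem, which is all the paper does. Its proof is a one-line citation of Lyon's main result, so there is no internal argument for your proposal to match.
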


\begin{proof}
This is exactly the main result of Lyon \cite{Lyon1980}, reformulated in the present terminology.
\end{proof}

\begin{corollary}\label{cor:torus-universal}
The family of torus knots is universal for this realization problem.
\end{corollary}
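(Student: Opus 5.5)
The plan is to read this off directly from Theorem~\ref{thm:lyon}, once the word \emph{universal} is made precise. I will say that a family $\mathcal{F}\subset\mathcal{K}$ is universal for the realization problem if
\[
\bigcup_{K\in\mathcal{F}} S(K)=\mathcal{K},
\]
that is, if every non-trivial knot type is hosted by some member of $\mathcal{F}$. This is the meaning used in the introduction and in Figure~\ref{fig:universal-family}, and it is the collective counterpart of the single-knot notion that Theorem~\ref{thm:no-universal-host} rules out.

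With this definition, let $\mathcal{T}$ denote the set of non-trivial torus knots. The inclusion $\bigcup_{T\in\mathcal{T}} S(T)\subset\mathcal{K}$ holds by Definition~\ref{def:SK}, since each $S(T)$ consists of non-trivial knot types. For the reverse inclusion, take any $J\in\mathcal{K}$. Theorem~\ref{thm:lyon} supplies a torus knot $T$ with $J\in S(T)$. Hence $J\in\bigcup_{T\in\mathcal{T}}S(T)$, and the two sets are equal.

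The only point needing care is bookkeeping about conventions rather than mathematics. First, one should check that the torus knot $T$ produced by Lyon's theorem is non-trivial. If $T$ were trivial, its minimal genus Seifert surface would be a disk, on which every simple closed curve bounds a disk and is unknotted, so $S(T)$ would be empty; thus any $T$ with $J\in S(T)$ is automatically non-trivial. Second, Lyon's result is stated for knots up to isotopy and may involve either handedness. Since $\mathcal{T}$ is closed under mirroring, Proposition~\ref{prop:mirror-equivariance} means no chirality restriction is needed. I do not expect any step to be a genuine obstacle; the corollary is a restatement of Theorem~\ref{thm:lyon}.
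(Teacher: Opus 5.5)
Your proposal is correct and matches the paper's proof, which simply notes that the corollary is an immediate reformulation of Theorem~\ref{thm:lyon}. Your explicit definition of universality (every non-trivial knot type is hosted by some member of the family) and your checks on non-triviality and chirality are valid extra bookkeeping that the paper leaves implicit.
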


\begin{proof}
This is an immediate reformulation of Theorem~\ref{thm:lyon}.
\end{proof}

Thus universality does occur at the level of families.
Our next goal is to show that no individual knot is universal.

\subsection{A Heegaard-surface invariant and tunnel number}

To detect restrictions on the knots hosted by a fixed knot, we introduce a simple
Heegaard-theoretic invariant.

\begin{definition}[\cite{Morimoto1994}]\label{def:hJ}
For a knot $J$ in $S^3$, define
\[
h(J)
=
\min\left\{
g(\Sigma)
\;\middle|\;
\Sigma \subset S^3 \text{ is a Heegaard surface containing } J
\right\}.
\]
Thus $h(J)$ is the minimal genus of a Heegaard surface of $S^3$ on which $J$ can be realized.
\end{definition}

The invariant $h(J)$ dominates the tunnel number.

\begin{proposition}[\cite{Morimoto1994}]\label{prop:tunnel-le-h}
For every knot $J$ in $S^3$,
\[
t(J)\le h(J).
\]
\end{proposition}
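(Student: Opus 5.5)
The inequality compares two descriptions of the complement $E(J)$, and the plan is to turn a minimal Heegaard surface containing $J$ directly into an unknotting tunnel system. Let $g=h(J)$ and fix a Heegaard surface $\Sigma$ of genus $g$ with $J\subset\Sigma$, splitting $S^3$ into handlebodies $V$ and $W$. The aim is to show that $E(J)=S^3\setminus N(J)$ has a Heegaard splitting of genus $g+1$. Since $t(J)+1$ is the Heegaard genus of $E(J)$, this gives $t(J)\le g$.

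\textbf{Step 1: push $J$ into $V$.} Push $J$ slightly into $V$, so that $J$ becomes a curve parallel into $\partial V$. Let $A$ be the product annulus between $J$ and its copy on $\Sigma$. A regular neighborhood $N(J)$ can be taken inside $V$, near $\partial V$.

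\textbf{Step 2: build the splitting of $E(J)$.} Write $E(J)=(V\setminus N(J))\cup W$ and choose a compression body $C$ and a handlebody $H$ as follows.
\begin{itemize}
\item The compression body is $C=N(\partial N(J)\cup \tau)\cup W'$, where $\tau$ is a vertical arc in $A$ from $J$ to $\Sigma$ and $W'=W\cup(\text{collar of }\Sigma)$. It has $\partial_-C=\partial N(J)$ and is obtained from a collar of the torus $\partial N(J)$ by attaching a $1$-handle along $\tau$ joining it to $W$. Equivalently, take $W$ together with the tunnel $\tau$ and $N(J)$.
\item The handlebody is $H=V\setminus N(J\cup\tau)$, the complement in $V$ of $N(J)$ together with the arc $\tau$ to the boundary.
\end{itemize}
Because $J$ is parallel to $\partial V$, the curve $J\cup\tau$ together with $\partial V$ forms a boundary-parallel graph. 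Drilling it from $V$ yields a handlebody of genus $g+1$: removing a neighborhood of a $\partial$-parallel knot plus its connecting arc is the same as adding a $1$-handle, and one can check this by cutting along the disk swept by $A$. Meanwhile $W\cup N(\tau)\cup N(J)$ is $W$ plus a $1$-handle, so it has genus $g+1$, and after removing $N(J)$ it becomes a compression body with $\partial_-=\partial N(J)$. Hence $E(J)=C\cup H$ is a genus $g+1$ Heegaard splitting. Equivalently, a system of $g$ tunnels for $J$ consists of $\tau$ together with cores of $W$ whose complement is handled as above; or, more simply, $g$ cocores of $V$ minus one, plus $\tau$.

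\textbf{Main obstacle.} The delicate point is verifying that $V\setminus N(J\cup\tau)$ is a handlebody. I would check this by an explicit isotopy. The arc $\tau$ and the annulus $A$ let us isotope $N(J\cup\tau)$ to a neighborhood of a loop in $\partial V$ pushed in, and the complement of such a ``half-pushed'' boundary-parallel knot with a tunnel to the boundary is $V$ with a $1$-handle attached, i.e.\ it is homeomorphic to $V\natural(S^1\times D^2)$. The count of tunnels then gives $t(J)\le g=h(J)$. Alternatively, since the statement is attributed to \cite{Morimoto1994}, one may simply cite it there.
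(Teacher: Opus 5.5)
Your proof is correct and uses the same idea the paper sketches: one handlebody ($W$, attached to $J$ by the arc $\tau$) supplies the tunnel side. Your check, by cutting along the disk $A\setminus N(\tau)$, that $V\setminus N(J\cup\tau)$ is a genus $g+1$ handlebody fills in the step the paper leaves implicit; only the ``Equivalently'' aside needs to go, since $\tau$ has just one endpoint on $J$ and is not itself a tunnel, and an actual system of $g$ tunnels comes from sliding the $g$ loops of a bouquet spine of $W$, based at the far end of $\tau$, down $\tau$ onto $J$.
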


\begin{proof}
Suppose that $J$ lies on a Heegaard surface $\Sigma$ of genus $g$.
Using one of the two handlebodies bounded by $\Sigma$, one obtains a tunnel system for $J$
with at most $g$ tunnels.
Hence
\[
t(J)\le g.
\]
Taking the minimum over all such Heegaard surfaces gives
\[
t(J)\le h(J).
\]
\end{proof}

\subsection{Hosted knots and complexity bounds}

We now show that if a knot $J$ is hosted by $K$, then the complexity of $J$ is bounded
in terms of a minimal genus Seifert surface of $K$.

Let $F\subset S^3$ be a compact connected orientable surface of positive genus with one boundary component,
and let
$N(F)$
be a regular neighborhood of $F$ in $S^3$. Put
\[
E(F):=\operatorname{cl}(S^3\setminus N(F)).
\]
Then $N(F)$ is a handlebody of genus $2g(F)$, and $\partial E(F)=\partial N(F)$ is a closed orientable surface of genus $2g(F)$.

\begin{definition}\label{def:deltaF}
Let $F\subset S^3$ be a compact connected orientable surface of positive genus with one boundary component.
Define
\[
\delta(F):=g_H(E(F))-2g(F),
\]
where $g_H(E(F))$ denotes the Heegaard genus of the compact $3$-manifold $E(F)$.
\end{definition}

\begin{remark}
The manifold $E(F)$ is a compact orientable $3$-manifold with connected boundary
$\partial E(F)=\partial N(F)$,
which is a closed surface of genus $2g(F)$. Hence every Heegaard surface for $E(F)$ has genus at least $2g(F)$, so \(g_H(E(F))\ge 2g(F)\). Therefore \(\delta(F)\ge 0\). Moreover, $\delta(F)=0$
if and only if $E(F)$ is a handlebody, equivalently, if and only if $\partial N(F)$ itself is a
Heegaard surface of $S^3$.
\end{remark}

We recall the convention for compression bodies that will be used below.

\begin{definition}\label{def:compression-body}
A \emph{compression body} \(V\) is either a handlebody, in which case
\(\partial_-V=\varnothing\), or a compact connected orientable \(3\)-manifold
obtained from \(F\times I\), where \(F\) is a nonempty closed orientable surface,
possibly disconnected, with no \(2\)-sphere components, by attaching \(1\)-handles
to \(F\times\{1\}\). In the latter case, the \emph{negative boundary}
is
\[
\partial_-V=F\times\{0\},
\]
and the \emph{positive boundary} is
\[
\partial_+V=\operatorname{cl}\bigl(\partial V\setminus\partial_-V\bigr).
\]
Thus \(\partial_+V\) is the boundary obtained from \(F\times\{1\}\) after the
\(1\)-handles have been attached.
\end{definition}

The following elementary lemma is the key step.

\begin{lemma}\label{lem:push-to-heegaard}
Let $M$ be a compact orientable $3$-manifold with connected boundary, and let
$M=V\cup_{\Sigma} W$
be a Heegaard splitting such that $V$ is a compression body with
$\partial_-V=\partial M$.
Then every simple closed curve on $\partial M$ is isotopic in $V$ to a simple closed
curve on $\Sigma$.
\end{lemma}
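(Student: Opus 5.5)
The plan is to use the product structure of a compression body near its negative boundary. By Definition~\ref{def:compression-body}, $V$ is obtained from $\partial_-V\times I$ by attaching $1$-handles to $\partial_-V\times\{1\}$. Here $\partial_-V=\partial M$ is a connected closed surface, and $\partial_+V=\Sigma$. The $1$-handles are attached along finitely many disjoint disks $D_1,\dots,D_n$ in $\partial_-V\times\{1\}$.

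Let $c\subset\partial M$ be a simple closed curve, identified with $c\times\{0\}\subset\partial_-V\times\{0\}$. First, isotope $c$ within $\partial M$ so that $c\times\{1\}$ misses every attaching disk $D_i$. This is possible because the $D_i$ are finitely many small disks: a general-position isotopy in the surface, or equivalently a choice of attaching disks in the complement of $c\times\{1\}$, makes them disjoint. Once this holds, $c\times\{1\}$ lies in
\[
\partial_-V\times\{1\}\setminus\bigcup_i D_i\subset\partial_+V=\Sigma .
\]

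Next, the vertical track $\{c\times\{t\}\}_{t\in[0,1]}$ inside $\partial_-V\times I\subset V$ is an isotopy in $V$ from $c\times\{0\}$ to $c\times\{1\}\subset\Sigma$. Composing it with the preliminary surface isotopy from the first step proves the lemma. If one prefers not to move $c$ within $\partial M$ at all, one instead isotopes the disks $D_i$ off $c\times\{1\}$ inside $\partial_-V\times\{1\}$. This does not change the homeomorphism type of $V$, and it keeps the identification $\partial_-V=\partial M$ fixed.

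The only point needing any care is the interaction with the handles, namely making sure $c\times\{1\}$ avoids the attaching disks, and the general-position argument above settles it. The handlebody case $\partial_-V=\varnothing$ cannot occur, since $\partial M$ is nonempty. In the intended application, the isotopy stays in $V$, hence away from $W$. That is exactly what is needed to place knots on $\partial N(F)$ onto a Heegaard surface of $S^3$ of genus $g_H(E(F))$, and then bound $h(J)$ via Proposition~\ref{prop:tunnel-le-h}.
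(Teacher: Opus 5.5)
Your proof is correct and is essentially the paper's argument: push $c$ through the product region $\partial M\times I$ and make $c\times\{1\}$ miss the attaching disks of the $1$-handles, so that it lies on $\Sigma$. The paper pushes first and then isotopes inside $\partial M\times\{1\}$, while you isotope first; the order is immaterial. One wording point: a curve cannot be pushed off a $2$-disk in a surface by general position alone, since the dimensions do not allow it. The correct justification is that the disks can be shrunk within the surface to tiny neighborhoods of points, and those points can then be moved off $c$ by general position.
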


\begin{proof}
By definition, the compression body $V$ is obtained from $\partial M\times I$ by attaching
$1$-handles to $\partial M\times\{1\}$. Let $c\subset \partial M=\partial M\times\{0\}$ be a
simple closed curve. First push $c$ through the product region to a simple closed curve
$c_1\subset \partial M\times\{1\}$. The positive boundary $\Sigma=\partial_+V$ is obtained from
$\partial M\times\{1\}$ by attaching $1$-handles. After a small isotopy in
$\partial M\times\{1\}$, we may assume that $c_1$ is disjoint from the attaching disks of these
$1$-handles. Then the inclusion \(\partial M\times\{1\}\setminus (\text{attaching disks})\hookrightarrow \Sigma\)
identifies $c_1$ with a simple closed curve on $\Sigma$. Thus $c$ is isotopic in $V$ to a
simple closed curve on $\Sigma$.
\end{proof}

We also need the following standard fact.

\begin{lemma}\label{lem:compressionbody-plus-handlebody}
Let $H$ be a handlebody, and let $V$ be a compression body with
$\partial_-V=\partial H$.
Then the manifold obtained by gluing $H$ and $V$ along $\partial H=\partial_-V$ is a
handlebody of genus $g(\partial_+V)$.
\end{lemma}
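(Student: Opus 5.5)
We will prove Lemma~\ref{lem:compressionbody-plus-handlebody} by reading off a handle decomposition of $H\cup V$ directly from the definition of a compression body (Definition~\ref{def:compression-body}). Since $\partial_-V=\partial H$ is nonempty, $V$ is not a handlebody. Hence $V$ is obtained from $\partial H\times I$ by attaching $1$-handles $h_1,\dots,h_k$ to $\partial H\times\{1\}$, with $\partial_-V=\partial H\times\{0\}$.

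First we glue $H$ to $\partial H\times I$ along $\partial H=\partial H\times\{0\}$. This union is homeomorphic to $H$, since attaching a collar to the boundary does not change the manifold. More precisely, $H\cup(\partial H\times I)$ is $H$ together with an external collar. It therefore has boundary $\partial H\times\{1\}$, and it is a handlebody of genus $g=g(\partial H)$.

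Next we attach the $1$-handles $h_1,\dots,h_k$ to this handlebody along disks in its boundary $\partial H\times\{1\}$. A handlebody of genus $g$ with $k$ $1$-handles attached along its boundary is again a handlebody, provided the result is connected and orientable. The reason is that a handlebody is a $0$-handle plus $g$ $1$-handles, and the new $1$-handles can be slid off the old ones so that they attach to the $0$-handle, giving a $0$-handle with $g+k$ $1$-handles. Connectedness holds because $V$ is connected and $H$ is connected. Orientability holds because $V$ is orientable, so each $1$-handle is attached orientably.

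Finally, the genus follows from the boundary. The boundary of $H\cup V$ is exactly $\partial_+V$, and the genus of a handlebody equals the genus of its boundary surface. Hence the result is a handlebody of genus $g(\partial_+V)=g+k$. Alternatively, one can check this with the Euler characteristic: $\chi(\partial_+V)=\chi(\partial H)-2k$.

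The only point that needs care is the claim that attaching $1$-handles to a handlebody yields a handlebody. Isotoping the attaching disks into the boundary of the $0$-handle, which is possible because the attaching disks can be slid across the $1$-handle feet, resolves it in a standard way.
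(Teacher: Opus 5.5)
Your proof is correct and follows essentially the same route as the paper: absorb the collar $\partial H\times I$ into $H$, observe that $H\cup V$ is then $H$ with finitely many $1$-handles attached, and read off the genus from the boundary $\partial_+V$. Your added justification of the step the paper only asserts is sound. That step is that attaching $1$-handles orientably to a handlebody gives a handlebody, which you justify by isotoping the attaching disks, along the lateral annuli of the old $1$-handles, onto the boundary of the $0$-handle; this is an isotopy of attaching disks rather than a genuine handle slide.
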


\begin{proof}
The compression body $V$ is obtained from $\partial_-V\times I$ by attaching $1$-handles
to $\partial_-V\times\{1\}$.
After gluing $H$ to $\partial_-V\times\{0\}=\partial H$, the product region
$\partial_-V\times I$ is absorbed into $H$, so the resulting manifold is obtained from the
handlebody $H$ by attaching finitely many $1$-handles.
Therefore the glued manifold is again a handlebody.
Its boundary is the positive boundary $\partial_+V$, and hence its genus is $g(\partial_+V)$.
\end{proof}

A Heegaard splitting of a compact orientable $3$-manifold $M$ with connected boundary is, by
convention, written in the form
\[
M=V\cup_{\Sigma}W,
\]
where $V$ is a compression body with $\partial_-V=\partial M$ and $W$ is a handlebody.

\begin{theorem}\label{thm:h-bound-from-F}
Let $F\subset S^3$ be a compact connected orientable surface of positive genus with one boundary component,
and let $J\subset F$ be a simple closed curve.
Then
\[
h(J)\le 2g(F)+\delta(F).
\]
\end{theorem}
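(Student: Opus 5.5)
The plan is to build an explicit Heegaard surface of $S^3$ of genus $g_H(E(F))=2g(F)+\delta(F)$ that contains $J$; the bound on $h(J)$ then follows from Definition~\ref{def:hJ}.

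First, move $J$ onto the boundary of the neighborhood. Identify $N(F)$ with $F\times[-1,1]$ and push $J\subset F$ to $J\times\{1\}\subset F\times\{1\}\subset\partial N(F)$. This is an isotopy in $S^3$, so $J$ is represented by a simple closed curve on $\partial N(F)=\partial E(F)$.

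Next, take a minimal genus Heegaard splitting $E(F)=V\cup_\Sigma W$ in the convention fixed above. Here $V$ is a compression body with $\partial_-V=\partial E(F)$, $W$ is a handlebody, and $g(\Sigma)=g_H(E(F))=2g(F)+\delta(F)$. By Lemma~\ref{lem:push-to-heegaard}, the curve $J\times\{1\}$ on $\partial_-V$ is isotopic inside $V$ to a simple closed curve on $\Sigma$. This isotopy is supported in $E(F)\subset S^3$, so $J$ is isotopic in $S^3$ to a curve on $\Sigma$.

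Finally, check that $\Sigma$ is a Heegaard surface of $S^3$. One side of $\Sigma$ in $S^3$ is $W$, which is a handlebody. The other side is $N(F)\cup_{\partial N(F)}V$. Since $N(F)$ is a handlebody (of genus $2g(F)$) and $\partial_-V=\partial N(F)$, Lemma~\ref{lem:compressionbody-plus-handlebody} shows this side is a handlebody of genus $g(\partial_+V)=g(\Sigma)$. So $S^3=(N(F)\cup V)\cup_\Sigma W$ is a Heegaard splitting of genus $2g(F)+\delta(F)$ with $J\subset\Sigma$ up to isotopy, which gives $h(J)\le 2g(F)+\delta(F)$.

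The steps are routine given the two lemmas. The main point needing care is the degenerate case in which $E(F)$ is itself a handlebody. Then one may take $V$ to be a collar $\partial E(F)\times I$ with no $1$-handles, so that $\Sigma$ is parallel to $\partial N(F)$. The argument is unchanged, provided this product is accepted as a compression body under Definition~\ref{def:compression-body}, which it is since attaching zero $1$-handles is allowed.
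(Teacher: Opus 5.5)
Your proposal is correct and follows the paper's proof essentially step for step: you push $J$ onto $\partial N(F)$, use Lemma~\ref{lem:push-to-heegaard} to move it onto the Heegaard surface $\Sigma$ of a minimal genus splitting of $E(F)$, and use Lemma~\ref{lem:compressionbody-plus-handlebody} to see that $(N(F)\cup V)\cup_\Sigma W$ is a genus $2g(F)+\delta(F)$ Heegaard splitting of $S^3$ containing a copy of $J$. Your remark on the case where $E(F)$ is a handlebody, with $V$ a product carrying no $1$-handles, makes explicit a convention the paper leaves implicit and does not change the argument.
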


\begin{proof}
Set
\[
N:=N(F), \qquad E:=E(F)=\operatorname{cl}(S^3\setminus N).
\]
Choose a Heegaard splitting of $E$ of minimal genus and write it as
\[
E=V\cup_{\Sigma} W,
\]
where $V$ is a compression body with
$\partial_-V=\partial E=\partial N$.
Then
\[
g(\Sigma)=g_H(E)=2g(F)+\delta(F).
\]

The curve $J\subset F$ determines a simple closed curve on $\partial N=\partial E$
representing the same knot type.
By Lemma~\ref{lem:push-to-heegaard}, this curve is isotopic in $V$ to a simple closed
curve $J'\subset\Sigma$.
Since this isotopy takes place inside the submanifold $V\subset S^3$, the isotopy extension
theorem (see, for example, \cite[Chapter~8]{Hirsch1976}) promotes it to an ambient isotopy of $S^3$.
Hence $J'$ represents the same knot type as $J$.

Now glue $N$ to $V$ along $\partial N=\partial_-V$.
By Lemma~\ref{lem:compressionbody-plus-handlebody}, the manifold $N\cup V$ is a handlebody
whose boundary is $\Sigma$.
Since $W$ is also a handlebody, we obtain a Heegaard splitting
\[
S^3=(N\cup V)\cup_{\Sigma}W
\]
of genus $g(\Sigma)=2g(F)+\delta(F)$.
By construction, the curve $J'$ lies on $\Sigma$ and represents the knot type $J$.
Therefore
\[
h(J)\le g(\Sigma)=2g(F)+\delta(F).
\]
\end{proof}

Applying this to a minimal genus Seifert surface gives the following.

\begin{corollary}\label{cor:h-bound-from-K}
Let $K$ be a knot, and let $F$ be a minimal genus Seifert surface for $K$.
If $J\in S(K)$ is represented by a simple closed curve on $F$, then
\[
h(J)\le 2g(K)+\delta(F).
\]
\end{corollary}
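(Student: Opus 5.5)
This follows by specializing Theorem~\ref{thm:h-bound-from-F} to $F$ and then using the definition of genus. First, I will check that $F$ satisfies the hypotheses of Theorem~\ref{thm:h-bound-from-F}. A minimal genus Seifert surface of a knot is compact, connected and orientable, and its boundary is the single component $K$. Its genus is positive: $S(K)$ consists of non-trivial knot types, so the statement only concerns non-trivial $K$ (here $K\in\mathcal K$). A non-trivial knot cannot bound a disk, so $g(F)=g(K)\ge 1$.

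Next, let $J\in S(K)$ be represented by a simple closed curve $c\subset F$. Applying Theorem~\ref{thm:h-bound-from-F} to the pair $(F,c)$ gives
\[
h(J)\le 2g(F)+\delta(F).
\]

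Finally, $F$ realizes the minimal genus among Seifert surfaces of $K$, so $g(F)=g(K)$. Substituting this into the inequality gives $h(J)\le 2g(K)+\delta(F)$, which is the claim.

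There is no real obstacle; the argument is pure bookkeeping. The only point needing care is the positive-genus hypothesis, which comes from the non-triviality of $K$. If one wanted to allow $K$ to be trivial, $F$ would be a disk and every curve on it would be unknotted. The set $S(K)$ would then be empty and the statement vacuous.
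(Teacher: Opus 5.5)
Your proposal is correct and follows the paper's proof: substitute $g(F)=g(K)$ into the bound $h(J)\le 2g(F)+\delta(F)$ from Theorem~\ref{thm:h-bound-from-F}. Your check that $F$ has positive genus, because $K\in\mathcal K$ is non-trivial, is a small extra that the paper leaves implicit.
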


\begin{proof}
Since $F$ is a minimal genus Seifert surface for $K$, one has
$g(F)=g(K)$.
Now apply Theorem~\ref{thm:h-bound-from-F}.
\end{proof}

To obtain a bound depending only on $K$, we record the worst-case value of $\delta(F)$
among all minimal genus Seifert surfaces of $K$.

\begin{definition}\label{def:DeltaK}
For a knot $K$, define
\[
\Delta(K)
:=
\sup\left\{
\delta(F)
\;\middle|\;
F \text{ is a minimal genus Seifert surface for } K
\right\}.
\]
\end{definition}

\begin{proposition}\label{prop:Delta-finite}
For every knot $K$, the quantity $\Delta(K)$ is finite.
\end{proposition}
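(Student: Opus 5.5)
The plan is to bound $g_H(E(F))$ by a quantity depending only on $K$, uniformly over all minimal genus Seifert surfaces $F$. Since $2g(F)=2g(K)$ is fixed, this bounds $\delta(F)$ and hence $\Delta(K)$. Minimal genus Seifert surfaces may form infinitely many isotopy classes (for instance for certain satellite knots), so finiteness cannot come from enumerating them. Instead I will use normal surface theory in a fixed triangulation of the knot exterior.

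\textbf{Step 1: reduce to a cut-open exterior.} Let $X=\operatorname{cl}(S^3\setminus N(K))$ be the exterior of $K$. Fix once and for all a triangulation $\mathcal T$ of $X$ with $t$ tetrahedra. For a minimal genus Seifert surface $F$, the proper surface $F\cap X$ is incompressible and $\partial$-incompressible. Since $N(F)$ contains a neighborhood of $K$, $E(F)$ is homeomorphic to $X$ cut open along $F\cap X$. Thus $\delta(F)$ depends only on the pair $(X,F\cap X)$ up to homeomorphism.

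\textbf{Step 2: normalize $F$.} By the standard normalization argument of Haken, the incompressible and $\partial$-incompressible surface $F\cap X$ can be isotoped to be normal with respect to $\mathcal T$. Its normal coordinates are unbounded as $F$ varies. However, the Euler characteristic $\chi(F)=1-2g(K)$ is fixed.

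\textbf{Step 3: decompose the cut-open manifold.} Cut $X$ along the normal surface $F$. Inside each tetrahedron, the complementary regions between parallel normal disks of the same type are product regions $D^2\times I$. Gluing these across faces yields an $I$-bundle region $\mathcal P\subset E(F)$, which is an $I$-bundle over a subsurface. The key claim, following Haken's counting argument as used by Jaco--Oertel and Jaco--Sedgwick, is that the non-product part $E(F)\setminus\mathcal P$ is a union of at most $c(t)$ pieces of bounded combinatorial complexity. This holds because each tetrahedron contains at most six non-parallel normal disk types, and hence only boundedly many non-product slabs. Moreover, the frontier of $\mathcal P$ consists of boundedly many annuli and disks. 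Since $F$ is orientable, two-sided and separating-adjacent in $S^3$, the $I$-bundle components are trivial, of the form $G\times I$ for subsurfaces $G\subset F$. Their topology is controlled by $\chi(F)$ together with the bounded frontier.

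\textbf{Step 4: build a Heegaard splitting of bounded genus.} I would triangulate or handle-decompose the bounded pieces, and give each product piece $G\times I$ a standard splitting as a compression body glued to a handlebody. I would then amalgamate along the bounded frontier. Amalgamation adds genus at most linearly in the number and complexity of the gluing annuli and disks. The result is a Heegaard splitting of $E(F)$ with $\partial_- V=\partial E(F)$ whose genus is bounded by a function $\Phi(t,g(K))$ that is independent of $F$. Hence $\delta(F)\le \Phi(t,g(K))-2g(K)$ for all $F$, so $\Delta(K)<\infty$.

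\textbf{Main obstacle.} The hard part is Step 3, together with its use in Step 4. One must make rigorous that all but boundedly many normal regions are parallel. One must also check that the product region contributes genus controlled only by $g(K)$ and not by the normal weight of $F$.

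If this becomes messy, I would fall back on the finiteness theorem that incompressible surfaces of bounded genus in a Haken manifold fall into finitely many classes up to homeomorphisms of the manifold (Haken; Jaco--Oertel). By Step 1, $\delta(F)$ is invariant under homeomorphisms of $X$ carrying one surface to another. Finitely many classes then give finitely many values of $\delta$, so the supremum is a maximum, and in particular $\Delta(K)<\infty$.
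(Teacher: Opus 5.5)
Your main route differs from the paper's and is sound in outline. The paper never estimates $g_H(E(F))$ directly. It uses Wilson's theorem to write each incompressible Seifert surface as a Haken sum $S_i+\sum a_jQ_j$, and uses $\chi(F)=1-2g(K)$ to bound every coefficient except those of tori. It then realizes each torus summand by a spinning homeomorphism of $E(K)$ (Johnson--Pelayo--Wilson). So $E(F)\cong E(F_i)$ for one of finitely many reference surfaces, and $\delta$ takes finitely many values.

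Your fallback is the same idea, with the decisive step (that the unbounded torus summands act by homeomorphisms) outsourced to a general finiteness-up-to-homeomorphism theorem. In this setting that theorem is essentially what the paper proves, so you would need a precise statement and reference, not a loose attribution.

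Your Step 1 observation is a real simplification. Since $E(F)$ is $X$ cut along $F\cap X$, any homeomorphism of $X$ carrying one surface to the other suffices. The paper's insistence that $\varphi$ be the identity on $\partial E(K)$ and extend over $N(K)$ is therefore unnecessary.

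What the parallelity-bundle route buys is independence from any structure theory of the set of Seifert surfaces. It also gives an explicit bound depending only on the number $t$ of tetrahedra and on $g(K)$. Since $\delta$ is integer-valued, a uniform bound already shows the supremum is a maximum.

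Two points in Step 3 need correcting. First, the components of $\mathcal P$ need not be products $G\times I$. Two adjacent parallel sheets of $F$ in a tetrahedron can carry opposite transverse orientations. Then twisted $I$-bundles over non-orientable bases may occur, with horizontal boundary entirely in $F^+$ or entirely in $F^-$. Also, $F$ is non-separating in $X$, so ``separating-adjacent'' gives nothing. This is harmless, since twisted $I$-bundles over bases of bounded complexity also have bounded Heegaard genus.

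Second, and more important, the control of the base should be an explicit count:
\begin{itemize}
\item $\partial_h\mathcal P$ is $F^+\sqcup F^-$ with at most $O(t)$ normal disks removed (those adjacent to non-product regions). Hence $|\chi(\partial_h\mathcal P)|$ is bounded by a function of $t$ and $g(K)$.
\item Since $E(F)$ is connected, every component of $\mathcal P$ other than $E(F)$ itself meets one of the $O(t)$ frontier squares. This bounds the number of components.
\end{itemize}
With these, $E(F)$ admits a triangulation with boundedly many tetrahedra. A regular neighborhood of $\partial E(F)$ together with the interior $1$-skeleton is then a compression body whose complement is a handlebody. This gives the uniform Heegaard bound without any amalgamation argument.
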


\begin{proof}
Every minimal genus Seifert surface for $K$ is incompressible.  Wilson's
normal-surface theorem gives finitely many incompressible Seifert surfaces
$S_1,\dots,S_r$ and finitely many closed incompressible surfaces
$Q_1,\dots,Q_s$ in $E(K)$ such that every incompressible Seifert surface is
isotopic to a Haken sum
\[
S_i+\sum_{j=1}^s a_j Q_j,
\qquad a_j\in\mathbb Z_{\ge 0};
\]
see \cite[Theorem~1.1]{Wilson2008}.

All minimal genus Seifert surfaces for $K$ have the same Euler characteristic
$1-2g(K)$.  Since each closed incompressible surface $Q_j$ has
$\chi(Q_j)\le 0$ and Euler characteristic is additive under Haken sum, the
coefficients of those $Q_j$ with $\chi(Q_j)<0$ are bounded.  There are
therefore only finitely many partial sums obtained by retaining the Seifert
surface $S_i$ and all negative-Euler-characteristic summands while omitting the
torus summands.

For each partial sum that occurs, consider the component containing the unique
boundary curve.  Its genus cannot be smaller than $g(K)$.  On the other hand,
the Euler characteristic of the entire partial sum is $1-2g(K)$, because the
omitted summands are tori.  It follows that the boundary-containing component
has genus exactly $g(K)$ and that every remaining closed component has Euler
characteristic zero.  Thus the latter components are tori and may be absorbed
into the torus summands.  We consequently obtain a finite collection of
reference minimal genus Seifert surfaces
\[
F_1,\dots,F_N
\]
such that every minimal genus Seifert surface $F$ is obtained from one of the
$F_i$ by Haken summing with parallel copies of finitely many incompressible
tori.

For completeness, we spell out the local relation between a torus Haken
sum and the usual spinning operation.  Let $T$ be one of the incompressible
torus summands.  After isotoping $F_i$ and $T$ into minimal position and
choosing a product neighborhood $T\times[-1,1]$, the intersection
\[
F_i\cap\bigl(T\times[-1,1]\bigr)
\]
is a union of vertical spanning annuli of a single slope.  The regular
exchange defining the Haken sum of $F_i$ with a parallel copy of $T$ is
precisely the image of these annuli under a Dehn twist of
$T\times[-1,1]$ that is the identity on $T\times\{-1,1\}$.  Thus this
Haken sum is carried to $F_i$ by a self-homeomorphism of $E(K)$ supported
in the product neighborhood.  Repeating the construction for parallel
copies and performing the finitely many torus summands successively gives a
self-homeomorphism
\[
\varphi\colon E(K)\longrightarrow E(K)
\]
which is the identity on $\partial E(K)$ and carries the relevant reference
surface $F_i$ to $F$.  This is the spinning action used in the description of
the Kakimizu complex; see \cite[Section~3 and Lemmas~12 and~15]{JohnsonPelayoWilson2014}.

Because $\varphi$ is the identity on $\partial E(K)$, it extends by the identity
over $N(K)$ to a self-homeomorphism of $S^3$.  Consequently it carries a
regular neighborhood of $F_i$ to one of $F$ and induces a homeomorphism
\[
E(F_i)\cong E(F).
\]
Therefore
\[
g_H(E(F))=g_H(E(F_i))
\quad\text{and}\quad
g(F)=g(F_i)=g(K),
\]
so that $\delta(F)=\delta(F_i)$.  Hence
\[
\bigl\{\delta(F)\mid F\text{ is a minimal genus Seifert surface for }K\bigr\}
\subset
\{\delta(F_1),\dots,\delta(F_N)\},
\]
and the set on the left is finite.  Thus
\[
\Delta(K)=\max_{1\le i\le N}\delta(F_i)<\infty.
\]
\end{proof}

\begin{remark}
In particular, the supremum in Definition~\ref{def:DeltaK} is actually attained; that is,
$\Delta(K)$ is a maximum.
\end{remark}

\begin{remark}
It would be interesting to understand the possible values of \(\Delta(K)\) more concretely.
The definition is designed only to provide a uniform bound for hosted knots, and we do not
need an explicit example with \(\Delta(K)>0\) in the arguments below.
Nevertheless, computing \(\Delta(K)\) for concrete knot types, or deciding when it must vanish,
seems to be a natural problem in its own right.
\end{remark}

Although the minimal genus Seifert surface witnessing $J\in S(K)$ may depend on $J$,
Definition~\ref{def:DeltaK} provides a uniform bound because
$\delta(F)\le \Delta(K)$ for every such witnessing surface $F$.

\begin{theorem}\label{thm:h-uniform}
If $J\in S(K)$, then
\[
h(J)\le 2g(K)+\Delta(K).
\]
\end{theorem}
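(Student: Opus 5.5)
The plan is to combine Corollary~\ref{cor:h-bound-from-K} with the definition of $\Delta(K)$ and Proposition~\ref{prop:Delta-finite}.

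Suppose $J\in S(K)$. By Definition~\ref{def:SK}, there are a minimal genus Seifert surface $F$ for $K$ and a simple closed curve $c\subset F$ representing $J$. First note that $K$ is non-trivial, since $K\in\mathcal K$, so $g(F)=g(K)\ge 1$. Thus $F$ is a compact connected orientable surface of positive genus with one boundary component, and the hypotheses of Theorem~\ref{thm:h-bound-from-F} hold. Corollary~\ref{cor:h-bound-from-K} then gives
\[
h(J)\le 2g(K)+\delta(F).
\]

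Next, $F$ is one of the surfaces over which the supremum in Definition~\ref{def:DeltaK} is taken. Hence $\delta(F)\le\Delta(K)$. By Proposition~\ref{prop:Delta-finite}, $\Delta(K)$ is a finite integer, so we get the meaningful inequality $h(J)\le 2g(K)+\Delta(K)$. The right-hand side depends only on $K$, even though the witnessing surface $F$ may depend on $J$.

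There is essentially no obstacle here. All the substantive work was done earlier: pushing $J$ onto a Heegaard surface in Theorem~\ref{thm:h-bound-from-F}, and the finiteness of $\Delta(K)$ in Proposition~\ref{prop:Delta-finite}. The only points to check are that $F$ has positive genus, so the earlier results apply, and that the supremum is finite, so the bound is not vacuous.
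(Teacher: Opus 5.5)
Your proposal is correct and follows the same route as the paper: take a witnessing minimal genus surface $F$, apply Corollary~\ref{cor:h-bound-from-K}, and bound $\delta(F)\le\Delta(K)$ using the definition of the supremum. Your extra checks (that $g(F)\ge1$ because $K$ is non-trivial, and that Proposition~\ref{prop:Delta-finite} makes the bound finite) are harmless additions. The finiteness is not needed for the inequality itself to hold.
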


\begin{proof}
Since $J\in S(K)$, there exists a minimal genus Seifert surface $F$ for $K$ such that
$J$ is represented by a simple closed curve on $F$.
By Corollary~\ref{cor:h-bound-from-K},
\[
h(J)\le 2g(K)+\delta(F)\le 2g(K)+\Delta(K).
\]
\end{proof}

\begin{corollary}\label{cor:tunnel-bound}
If $J\in S(K)$, then
\[
t(J)\le h(J)\le 2g(K)+\Delta(K).
\]
\end{corollary}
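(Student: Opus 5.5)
This is immediate from two results already established, so the plan is simply to chain them. Assume $J\in S(K)$. Theorem~\ref{thm:h-uniform} then gives
\[
h(J)\le 2g(K)+\Delta(K).
\]
Proposition~\ref{prop:tunnel-le-h} gives $t(J)\le h(J)$ for every knot $J$. Concatenating the two inequalities yields
\[
t(J)\le h(J)\le 2g(K)+\Delta(K).
\]

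For this to be a genuine numerical bound, the right-hand side must be a finite quantity depending only on $K$. The genus $g(K)$ is finite, and $\Delta(K)$ is finite by Proposition~\ref{prop:Delta-finite}. I do not need to revisit the witnessing surface: Theorem~\ref{thm:h-uniform} already absorbs the dependence of the surface $F$ on $J$ through the supremum $\Delta(K)$.

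There is no real obstacle here. The substantive inputs are all upstream: the push-to-Heegaard-surface argument of Theorem~\ref{thm:h-bound-from-F} and the finiteness of $\Delta(K)$.

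Looking ahead, this corollary feeds Theorem~\ref{thm:no-universal-host} as follows. Since $2g(K)+\Delta(K)$ is a fixed integer, it suffices to exhibit a knot $J$ with $t(J)>2g(K)+\Delta(K)$. For example, one can take a connected sum of sufficiently many torus knots, whose tunnel number grows without bound by known degeneration bounds, or any family of knots with unbounded tunnel number. Such a $J$ then satisfies $J\notin S(K)$.
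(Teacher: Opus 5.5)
Your proposal is correct and is exactly the paper's proof, which simply combines Proposition~\ref{prop:tunnel-le-h} with Theorem~\ref{thm:h-uniform}. Your remark on the finiteness of $\Delta(K)$ is not needed for the inequality itself; it matters only when the bound is applied in Theorem~\ref{thm:no-universal-host}.
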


\begin{proof}
Combine Proposition~\ref{prop:tunnel-le-h} with Theorem~\ref{thm:h-uniform}.
\end{proof}

Thus the class of knots hosted by a fixed knot is subject to a uniform complexity bound.

\subsection{No universal host knot}

We now prove the main negative result stated in the introduction.

\begin{proof}[Proof of Theorem~\ref{thm:no-universal-host}]
Fix a knot $K$.
By Corollary~\ref{cor:tunnel-bound}, every knot $J\in S(K)$ satisfies
\[
t(J)\le 2g(K)+\Delta(K).
\]
On the other hand, there exist knots in $S^3$ with arbitrarily large tunnel number.
This follows, for example, from the theorem of Scharlemann and Schultens
that the tunnel number of the connected sum of $n$ non-trivial knots is at least $n$
\cite{ScharlemannSchultens1999}.
Choose a knot $J$ with
\[
t(J)>2g(K)+\Delta(K).
\]
Then necessarily
\[
J\notin S(K).
\]
Hence $K$ is not a universal host.
\end{proof}

\begin{remark}
Theorem~\ref{thm:no-universal-host} gives a sharp contrast with
Corollary~\ref{cor:torus-universal}.
Torus knots together form a universal host family, yet no single knot is universal.
Thus universality is a genuinely collective phenomenon.
\end{remark}

\section{The Hosting Sets of the Trefoils}\label{sec:trefoil}

In this section, we describe the hosting sets of the left- and right-handed trefoils.
The distinction between the two chiralities is important.  We first use Yamada's
canonical form on the left-handed trefoil fiber, where Baker's positive-braid formulas
apply directly, and then pass to the right-handed trefoil by mirror equivariance.
Finally, we give a separate diagrammatic realization of $T(n,n+1)$ on the right-handed
trefoil fiber.

Every minimal genus Seifert surface for a trefoil is incompressible.  By Tsau's
classification of incompressible surfaces in torus-knot exteriors
\cite{Tsau1994}, each trefoil has a unique minimal genus Seifert surface up to
isotopy.  Consequently, it is enough to work with the standard fiber surface.

\subsection{Yamada's slope basis and the hosting set}

Let $F^-=F^{+,+}$ be Yamada's genus-one fiber surface whose boundary is the
left-handed trefoil $3_1^-$ \cite[Definition~2.1 and Table~1]{Yamada2010}.
In Yamada's disk--band picture, $F^-$ is the union of a half-disk $D$ and two
bands $b_L$ and $b_R$.  Let
\[
a=c_L,\qquad b=c_R
\]
be the clockwise-oriented core circles of the left and right bands, respectively,
as in Yamada's Figure~3.  Orient $F^-$ so that
\[
[a]\cdot[b]=1.
\]
This explicitly fixes the ordered basis $([a],[b])$ of $H_1(F^-;\mathbb Z)$.

Every essential non-boundary-parallel simple closed curve on a once-punctured torus
represents a primitive homology class
\[
m[a]+n[b],\qquad \gcd(m,n)=1,
\]
and conversely every primitive class is represented by such a curve, uniquely up to
isotopy and reversal of orientation.

\begin{unnumbereddefinition}[The notation $K^-(m,n)$]
For coprime integers $m,n$, let $c_{m,n}\subset F^-$ be the essential simple closed
curve with oriented homology class $m[a]+n[b]$.  We define $K^-(m,n)$ to be the
unoriented knot type represented by $c_{m,n}$ in $S^3$.  This uppercase notation is
ours; in Yamada's notation the same curve is $k^{+,+}(m,n)$.
\end{unnumbereddefinition}

Yamada's Type~VII canonical-form theorem \cite[Theorem~1.1]{Yamada2010}, together
with Lemma~3.2 and Corollary~2.4 of that paper, gives a representative with
\[
0<m<n,\qquad \gcd(m,n)=1.
\]

\begin{proposition}\label{prop:trefoil-hosting-set}
One has
\[
S(3_1^-)
=
\{3_1^-\}
\cup
\{K^-(m,n)\mid 0<m<n,\ \gcd(m,n)=1\}.
\]
Consequently,
\[
S(3_1^+)
=
\{3_1^+\}
\cup
\{\overline{K^-(m,n)}\mid 0<m<n,\ \gcd(m,n)=1\}.
\]
\end{proposition}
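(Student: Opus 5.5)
By Tsau's classification \cite{Tsau1994}, as recalled above, $3_1^-$ has a unique minimal genus Seifert surface up to isotopy. The plan is therefore to compute $S(3_1^-)$ by classifying all simple closed curves on the single surface $F^-$ and identifying the knot type of each. The second formula will then follow at once from Proposition~\ref{prop:mirror-equivariance}, since $\overline{3_1^-}=3_1^+$.

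First we sort simple closed curves $c\subset F^-$ into three classes: inessential curves, curves parallel to $\partial F^-$, and essential non-boundary-parallel curves.
\begin{itemize}
\item An inessential curve bounds a disk in $F^-$, hence is trivial in $S^3$, and so contributes nothing to $S(3_1^-)$.
\item A boundary-parallel curve represents $3_1^-$, as in Proposition~\ref{prop:reflexive}.
\item An essential non-boundary-parallel curve represents a primitive class $m[a]+n[b]$ and is determined up to isotopy and reversal by that class. Its knot type is therefore $K^-(m,n)$ for some coprime pair, and $K^-(m,n)=K^-(-m,-n)$ because knot types here are unoriented.
\end{itemize}
This gives the description $S(3_1^-)=\{3_1^-\}\cup\{K^-(m,n)\mid\gcd(m,n)=1\}\setminus\{\text{unknot}\}$.

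The remaining work is to reduce the index set to $0<m<n$. The tool is the monodromy $\phi$ of the fibration: it is an ambient self-homeomorphism of $S^3$ isotopic to the identity that preserves $F^-$. Hence $c$ and $\phi(c)$ have the same knot type, so slopes in one orbit of the monodromy action (an order-six element of $SL_2(\mathbb Z)$) give equal knots. Yamada's Type~VII canonical-form theorem \cite[Theorem~1.1]{Yamada2010}, with Lemma~3.2 and Corollary~2.4 there, carries out exactly this normalization, combining the monodromy orbit with orientation reversal. It puts every primitive class into the form $0<m<n$, except for classes whose curve is unknotted. In the basis $([a],[b])$ these exceptional classes are $\pm[a]$, $\pm[b]$, and the class of the third unknotted core.

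Conversely, each $K^-(m,n)$ with $0<m<n$ is realized on $F^-$ by construction. It remains to check that it is non-trivial. I would verify this by citing Baker's identification of these curves as positive braid closures, which are non-trivial once the braid is nontrivial. Alternatively, one can note that the boundary slopes $0<m<n$ exclude the three unknotted classes.

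The main obstacle is this bookkeeping: matching Yamada's normalization and Baker's formulas precisely to our orientation convention $[a]\cdot[b]=1$. The aim is to be sure that the range $0<m<n$ captures every nontrivial knot type and contains no unknots. Here I would check directly the small cases $(m,n)=(0,1),(1,0),(1,1)$ against the monodromy to confirm that they are exactly the unknotted slopes.
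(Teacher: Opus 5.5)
Your proposal is correct and follows essentially the same route as the paper. Both use Tsau's uniqueness of the fiber, the trichotomy inessential/boundary-parallel/essential, Yamada's canonical form to reach $0<m<n$ (your monodromy-orbit explanation just makes the mechanism explicit), and mirror equivariance for $3_1^+$. You also state the reverse inclusion, that each $K^-(m,n)$ with $0<m<n$ is nontrivial, which the paper's proof leaves implicit.

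For that step, two corrections. Argue via Baker's Euler characteristic $\chi=-m^2-mn-n^2+2m+2n$ together with Cromwell's theorem, which gives genus at least $1$ throughout $0<m<n$. Do not rely on ``a nontrivial positive braid has nontrivial closure'', which is false in general (e.g.\ $\sigma_1\sigma_2\in B_3$ closes to the unknot). Your alternative argument is circular unless you already know the three listed slopes are the only unknotted ones.
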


\begin{proof}
The uniqueness of the minimal genus Seifert surface reduces the calculation to the
fiber $F^-$.  A non-trivial simple closed curve on $F^-$ is either boundary-parallel
or essential and non-boundary-parallel.  A boundary-parallel curve represents $3_1^-$.
For every other curve, Yamada's Theorem~1.1 gives the canonical form, and his
Lemma~3.2 and Corollary~2.4 reduce the parameters to the stated range.
The formula for $S(3_1^+)$ follows from Proposition~\ref{prop:mirror-equivariance}.
\end{proof}

\subsection{Genus and signature consequences}

\begin{proposition}[Baker]\label{prop:trefoil-Kmn-genus}
For $0<m<n$ and $\gcd(m,n)=1$,
\[
g(K^-(m,n))
=
\frac{m^2+n^2+mn-2m-2n+1}{2}.
\]
\end{proposition}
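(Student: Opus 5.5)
Since the statement is attributed to Baker, the plan is to identify $K^-(m,n)$ with a positive braid closure, then read off the genus from the Bennequin–Stallings formula. Positive braid closures are fibered, and the fiber is the canonical Seifert surface of the positive braid diagram. Consequently, if $K^-(m,n)$ is the closure of a positive braid $\beta$ on $s$ strands with $c$ crossings, then
\[
2g(K^-(m,n)) = c - s + 1 .
\]

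The first step is to make this identification concrete. The left-handed trefoil fiber $F^-$ is a plumbing of two Hopf bands of the same sign, drawn as the disk $D$ with the bands $b_L,b_R$. Push $c_{m,n}$ off the half-disk so that it runs $m$ times over $b_L$ and $n$ times over $b_R$. Because $0<m<n$, the canonical (Yamada Type~VII) representative can be chosen so that all strands passing through a given band are parallel and in the same direction. A curve that runs $k$ times in parallel through a band with one full twist contributes the full twist $\Delta_k^2$, which has $k(k-1)$ crossings. In addition, the full twist of the band itself, together with the crossings where the $m$ strands meet the $n$ strands in the plumbing square, contributes $mn$ crossings. Orienting all strands coherently, I expect to exhibit $K^-(m,n)$ (or its mirror, depending on the convention; the genus is unchanged) as the closure of a positive braid on $s=m+n$ strands with
\[
c = m(m-1) + n(n-1) + mn
\]
crossings. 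This is the content of Baker's positive-braid formula, which the text says applies directly on the left-handed fiber.

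The genus computation is then a direct substitution:
\[
c - s + 1 = m^2 - m + n^2 - n + mn - m - n + 1 = m^2 + n^2 + mn - 2m - 2n + 1 ,
\]
which is exactly twice the claimed genus. As a sanity check, $(m,n)=(1,2)$ gives $g=1$, which is consistent with low-slope curves on the fiber being trefoils.

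The main obstacle is the exact crossing count in the braid word, in particular the cross term $mn$ and the claim that every crossing is positive. I would verify it in two ways. First, I would use the Seifert form of $F^-$: the linking numbers of the cores are $\operatorname{lk}(a,a^+)=\operatorname{lk}(b,b^+)=-1$ and the mixed entries are $0$ and $\pm1$. The self-framing of $c_{m,n}$ is then $-(m^2+n^2+mn)$ up to sign, which matches the total writhe expected from the crossing count above. Second, I would check the small cases $m=1$ against the known torus-knot answers. If an independent check is impossible, I would simply cite Baker's formula for the braid word, as the attribution in the statement suggests.
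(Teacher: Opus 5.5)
Your proposal is correct and follows essentially the paper's route. The paper also rests on Baker's positive-braid representation, citing his Seifert-algorithm Euler characteristic $\chi=-m^2-mn-n^2+2m+2n$, which is exactly your $s-c$ for $s=m+n$ and $c=m(m-1)+n(n-1)+mn$. It then applies Cromwell's theorem on homogeneous diagrams where you apply Stallings' fiberedness of positive braid closures; both give the same fact, that the canonical surface of a positive braid has minimal genus.

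One small slip, which does not affect the argument: in your framing check, the surface framing $\pm(m^2+n^2+mn)$ exceeds the diagram writhe $\pm(m^2+n^2+mn-m-n)$ by $m+n$. Each passage of the curve through a full-twisted band contributes one to the surface framing but nothing to the writhe, so the two "match" only after that correction. Also, the $mn$ crossings come from the two bands crossing in the projection, not from the full twist of a band; those twists are already counted in $m(m-1)+n(n-1)$.
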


\begin{proof}
Baker proved that every essential knot on the fiber of the left-handed trefoil can be
represented as the closure of a positive braid; see
\cite[Theorem~B.0.1 and Appendix~B, Case~1]{Baker2004}.
In Appendix~B.2.2 he computes that the Seifert surface obtained from Seifert's algorithm
for the corresponding positive braid has Euler characteristic
\[
\chi=-m^2-mn-n^2+2m+2n.
\]
Since a positive braid diagram is homogeneous, Cromwell's theorem implies that this
Seifert surface has minimal genus \cite{Cromwell1989}.  The displayed formula follows.
\end{proof}

\begin{corollary}\label{cor:4_1_not_in_S3_1}
For both choices of chirality,
\[
4_1\notin S(3_1^\pm).
\]
\end{corollary}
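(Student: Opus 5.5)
The plan is to reduce to the left-handed trefoil and then rule out every element of $S(3_1^-)$ by a genus comparison. Since $4_1$ is amphichiral, $\overline{4_1}=4_1$. Proposition~\ref{prop:mirror-equivariance} then gives $4_1\in S(3_1^+)$ if and only if $4_1\in S(3_1^-)$, so it suffices to treat $3_1^-$.

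By Proposition~\ref{prop:trefoil-hosting-set}, every element of $S(3_1^-)$ is either $3_1^-$ itself or some $K^-(m,n)$ with $0<m<n$ and $\gcd(m,n)=1$. Certainly $4_1\neq 3_1^-$; for instance, their Alexander polynomials differ. So the remaining task is to show that $4_1\neq K^-(m,n)$ for every admissible pair.

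The genus of $4_1$ is $1$, and I will compare this with Proposition~\ref{prop:trefoil-Kmn-genus}. Set $f(m,n)=m^2+n^2+mn-2m-2n+1$, so that $g(K^-(m,n))=f(m,n)/2$. We need $f(m,n)\ne 2$ for all admissible pairs.
\begin{itemize}
\item Take $m=1$. Then $f(1,n)=n^2-n$, which equals $2$ only when $n=2$. This gives genus $1$ at $(1,2)$, so this pair must be handled separately.
\item Take $m\ge 2$, so $n\ge 3$. Writing $f=m(m-2)+n(n-2)+mn+1$ gives $f\ge 0+3+6+1>2$.
\end{itemize}
Hence the only possible genus-one knot of this form is $K^-(1,2)$.

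The main obstacle is identifying $K^-(1,2)$ and showing it is not $4_1$. I would use fiberedness and positivity. By Baker's result quoted in the proof of Proposition~\ref{prop:trefoil-Kmn-genus}, $K^-(1,2)$ is the closure of a positive braid. A nontrivial positive braid closure is fibered and positive, so it has negative signature; with this paper's orientation conventions it would actually be $3_1^-$ or $3_1^+$. The figure-eight knot has signature $0$, so it is not a nontrivial positive braid closure. (Alternatively, a genus-one positive braid closure must be a trefoil, and its Alexander polynomial differs from $t^2-3t+1$.)

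To be safe, I would cite the standard fact that a nontrivial positive braid closure has nonzero signature, as in Rudolph's work on positive links. The Euler characteristic computation also bounds the braid size, so a direct check of genus-one positive braids is possible. This shows $K^-(1,2)\neq 4_1$, which completes the proof.
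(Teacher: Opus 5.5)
Your proof is correct and rests on the same key step as the paper's: Baker shows that every essential non-boundary-parallel curve on the trefoil fiber is a nontrivial positive braid closure, Rudolph's theorem then gives it nonzero signature, and mirror equivariance handles the other chirality. Your genus computation isolating $K^-(1,2)$ is valid but unnecessary, because the signature argument already rules out every $K^-(m,n)$ at once; this is exactly how the paper argues.
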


\begin{proof}
A boundary-parallel curve on $F^-$ is the trefoil, not $4_1$.  Every essential
non-boundary-parallel knot on $F^-$ is the closure of a non-trivial positive braid
\cite[Appendix~B]{Baker2004}, and hence has nonzero signature
\cite{Rudolph1982}.  Since $\sigma(4_1)=0$, we have $4_1\notin S(3_1^-)$.
The figure-eight knot is amphichiral, so Proposition~\ref{prop:mirror-equivariance}
gives $4_1\notin S(3_1^+)$ as well.
\end{proof}

\begin{corollary}\label{cor:5_1_not_in_S3_1}
No knot of type $5_1$, of either chirality, belongs to $S(3_1^-)$ or $S(3_1^+)$.
\end{corollary}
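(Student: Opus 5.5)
We argue through the genus formula of Proposition~\ref{prop:trefoil-Kmn-genus}, as in Corollary~\ref{cor:4_1_not_in_S3_1}. By Proposition~\ref{prop:mirror-equivariance}, and since $5_1$ is chiral with mirror image again of type $5_1$, it suffices to show that neither $5_1$ nor $\overline{5_1}$ lies in $S(3_1^-)$. By Proposition~\ref{prop:trefoil-hosting-set}, every element of $S(3_1^-)$ is either $3_1^-$ itself, which has genus $1$, or a knot $K^-(m,n)$ with $0<m<n$ and $\gcd(m,n)=1$. Since $g(5_1)=2$, the boundary-parallel case is excluded, and we must show that $g(K^-(m,n))\neq 2$ throughout this parameter range.

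Set $G(m,n)=\tfrac12(m^2+n^2+mn-2m-2n+1)$. We have $G(1,2)=\tfrac12(1+4+2-2-4+1)=1$, which is consistent with $K^-(1,2)$ being a trefoil. Next, $G(1,3)=\tfrac12(1+9+3-2-6+1)=3$ and $G(2,3)=\tfrac12(4+9+6-4-6+1)=5$. For $n\ge 3$ the function $G$ is increasing in each variable, because $\partial G/\partial m=\tfrac12(2m+n-2)>0$ and $\partial G/\partial n=\tfrac12(2n+m-2)>0$ whenever $m\ge 1$ and $n\ge 2$. Every admissible pair other than $(1,2)$ has $n\ge 3$ and $m\ge 1$, so its genus is at least $G(1,3)=3$. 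The genus values that occur are therefore $1$ and values at least $3$; in particular $2$ never occurs.

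This rules out $5_1$ and $\overline{5_1}$ from $S(3_1^-)$, and mirror equivariance then rules both out of $S(3_1^+)$.

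If one wants a second, chirality-sensitive check, signature gives an alternative for one handedness. Nonzero-signature positive braid closures have a fixed sign, so they cannot equal the mirror of a positive knot. However, $5_1$ is itself a positive braid closure, so the genus argument is the essential one.

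The only real obstacle is making sure the monotonicity claim covers the boundary of the parameter range. Because $m$ and $n$ are integers, it is enough to compare $G$ at the smallest admissible pairs, as computed above.
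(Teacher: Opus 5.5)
Your proof is correct and follows the same route as the paper: discard the boundary-parallel (genus one) curve, use Baker's genus formula to show $G(m,n)\neq 2$ for coprime $0<m<n$, and transfer the result to $S(3_1^+)$ by mirror equivariance. The only cosmetic difference is that the paper splits into the cases $m=1$ (where $n(n-1)/2=2$ has no integer root) and $m\ge 2$ (where $G\ge G(2,3)=5$), whereas you use coordinatewise monotonicity to get $G\ge G(1,3)=3$ for every admissible pair other than $(1,2)$; your side remark on signature is not needed.
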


\begin{proof}
A boundary-parallel curve has genus one.  If a curve in the second set of
Proposition~\ref{prop:trefoil-hosting-set} represented a knot of type $5_1$, then
Proposition~\ref{prop:trefoil-Kmn-genus} and
$g(5_1)=2$ would give
\[
\frac{m^2+n^2+mn-2m-2n+1}{2}=2
\]
for coprime integers $0<m<n$.  If $m=1$, this becomes $n(n-1)/2=2$, which has no
integer solution.  If $m\ge2$, then $n\ge3$, and the left-hand side is at least
\[
\frac{2^2+3^2+2\cdot3-2\cdot2-2\cdot3+1}{2}=5.
\]
This proves the assertion for $S(3_1^-)$.  The assertion for $S(3_1^+)$ follows by
mirroring, since genus is unchanged by mirror image.
\end{proof}

\subsection{The torus knots \texorpdfstring{$T(n,n+1)$}{T(n,n+1)} on the trefoil fiber}

We now record the concrete realization needed for amicability.  Let $F^+$ be the
fiber surface of the right-handed trefoil $3_1^+=T(2,3)$.  It is the plumbing of two
positive Hopf bands; see, for example, \cite[Figure~B.1]{Baker2004}.
Fix an ordered pair $x,y$ of oriented core curves of these Hopf bands.  We use the
Seifert-matrix convention
\[
V_{ij}=\operatorname{lk}(x_i,x_j^+),
\qquad (x_1,x_2)=(x,y),
\]
where $x_j^+$ denotes the positive normal push-off of $x_j$ from $F^+$.  We choose
the order and orientations so that
\[
V=
\begin{pmatrix}
1&0\\
-1&1
\end{pmatrix}.
\]
Let $h\colon F^+\to F^+$ be the monodromy, with mapping-torus convention
\[
S^3\setminus \operatorname{int}N(3_1^+)
\cong F^+\times[0,1]/(z,1)\sim(h(z),0),
\]
and represent classes in $H_1(F^+;\mathbb Z)$ by column vectors in the ordered
basis $([x],[y])$.  With these conventions, the standard relation between the
Seifert form and the monodromy is
\[
Vh_*=V^{\mathsf T}.
\]
Consequently,
\[
h_*=V^{-1}V^{\mathsf T}
=
\begin{pmatrix}
1&-1\\
1&0
\end{pmatrix}.
\]

\begin{proposition}\label{prop:Tn-on-trefoil}
For every integer $n\ge2$, the surface $F^+$ contains an essential simple closed curve
$\gamma_n$ representing the right-handed torus knot $T(n,n+1)$.  With the basis above,
\[
[\gamma_n]=n[x]-[y].
\]
In particular,
\[
8_{19}^+=T(3,4)\in S(3_1^+).
\]
The mirror construction gives $8_{19}^-\in S(3_1^-)$.
\end{proposition}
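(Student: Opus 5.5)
The plan is to draw the curve $\gamma_n$ explicitly on the plumbing picture of $F^+$ and read off a braid word directly. A genus and chirality check against the earlier results then confirms the identification.

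Write $F^+=A_x\cup A_y$, where $A_x$ and $A_y$ are positive Hopf bands plumbed along a square $Q$, with cores $x$ and $y$ meeting once in $Q$. A curve in the class $n[x]-[y]$ is built as follows. Take $n$ parallel copies of the core of $A_x$ and one copy of $y$, traversed with reversed orientation. Resolve the $n$ intersection points in $Q$ consistently, choosing the smoothing that keeps the result oriented. This is the standard curve-surgery construction. Because the algebraic intersection numbers all have the same sign, the result is a single simple closed curve $\gamma_n$. Its class is $n[x]-[y]$, which is primitive since the coefficient of $[y]$ is $-1$. Hence $\gamma_n$ is essential and non-boundary-parallel.

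Next I would isotope $\gamma_n$ into braid position about the axis through the hole of the Hopf annulus $A_x$.
\begin{itemize}
\item The $n$ parallel strands running once around $A_x$ form $n$ braid strands. The positive full twist in the Hopf band $A_x$ contributes the full twist $\Delta_n^2=(\sigma_1\cdots\sigma_{n-1})^n$.
\item The single excursion across the band $A_y$ does two things. It joins the $n$ strands cyclically, shifting strand $i$ to strand $i+1$, which gives one extra factor $\sigma_1\cdots\sigma_{n-1}$. Since $A_y$ is also a positive Hopf band, its own twist contributes only an unknotted kink, which can be removed by a Markov destabilization.
\end{itemize}
The outcome should be the positive braid $(\sigma_1\cdots\sigma_{n-1})^{n+1}$, whose closure is $T(n,n+1)$. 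Since all crossings are positive, the knot is right-handed.

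As a consistency check, I would use the earlier results. By Proposition~\ref{prop:mirror-equivariance}, $\gamma_n$ corresponds to a curve $K^-(1,n)$ (or its mirror) on $F^-$. Proposition~\ref{prop:trefoil-Kmn-genus} with $m=1$ then gives genus $n(n-1)/2$, which is $g(T(n,n+1))$. I would also compare the surface framing $\gamma_n^{\mathsf T}V\gamma_n$ with the fiber framing of $T(n,n+1)$, and confirm the signs through $h_*$. The case $n=3$ gives $8_{19}^+\in S(3_1^+)$, and mirroring via Proposition~\ref{prop:mirror-equivariance} gives $8_{19}^-\in S(3_1^-)$.

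The main obstacle is the braid bookkeeping in the second step. One has to check two things: that the smoothing at $Q$ produces the cyclic shift rather than a non-positive crossing, and that the twist in $A_y$ really destabilizes rather than adding a negative crossing. These two points, together with the sign conventions fixed by $V$, decide $n[x]-[y]$ versus $n[x]+[y]$. I would first do $n=2$ by hand, checking that the curve is $3_1^+$ and that the sign conventions match $V$, and only then run the general induction.
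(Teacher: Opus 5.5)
Your route is the paper's route. You put $n$ strands through one Hopf band and one strand through the other, read off a positive braid, and reduce to $(\sigma_1\cdots\sigma_{n-1})^{n+1}$ by a cyclic conjugation and one Markov destabilization. For $n=3$, the paper's word $\sigma_1\sigma_2\sigma_3(\sigma_1\sigma_2)^3$ is exactly your $\Delta_3^2$, your shift $\sigma_1\sigma_2$, and your destabilizable $\sigma_3$.

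The gap is that this braid word is the whole content of the proposition, and you only assert it (``the outcome should be''). You also leave open the one point that decides the answer: the sign of the shift. If the shift created at the plumbing square carried crossings of the opposite sign, the same bookkeeping would give $\Delta_n^2(\sigma_1\cdots\sigma_{n-1})^{-1}=(\sigma_1\cdots\sigma_{n-1})^{n-1}$, whose closure is $T(n-1,n)$. That is in fact the knot type of the other resolution $n[x]+[y]$, whose $h_*$-orbit contains $[x]-(n-1)[y]$ up to sign. Nothing in your two bullet points tells these two cases apart.

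Moreover, attributing a full twist to $A_x$ and a kink to $A_y$, band by band, is the kind of independent bookkeeping that the paper explicitly warns against. The paper closes this step by importing Vidussi's explicit isotopy, in which all strands are tracked through the plumbing region as a single braid. There the strand coming from $y$ belongs to the factor $\sigma_1\cdots\sigma_n$ on all $n+1$ strands. The paper then checks $n=3$ by direct conjugation and destabilization.

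Your proposed checks do not repair this.
\begin{itemize}
\item Proposition~\ref{prop:mirror-equivariance} does not tell you which Yamada slope $\gamma_n$ corresponds to. Matching $n[x]-[y]$ with (the mirror of) $K^-(1,n)$ needs a dictionary between the basis $([x],[y])$ and Yamada's $([a],[b])$, and fixing that dictionary is the same sign problem.
\item Agreement of genus, or of surface framing, is only a necessary condition in any case.
\item The ``general induction'' is never specified.
\end{itemize}
To complete the proof, you must derive the word $\sigma_1\cdots\sigma_n\,\Delta_n^2$ on $n+1$ strands, with its signs, from the strand tracking through $Q$ for general $n$. Alternatively, cite Vidussi for it, as the paper does.
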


\begin{proof}
Vidussi draws the trefoil fiber as a plumbing of two twisted annuli and constructs a
simple closed curve passing through one annulus in $n$ parallel strands and through the
other in one strand \cite[Section~3, Figures~1--3]{Vidussi2006}.  In the basis fixed
above, the curve has class $n[x]-[y]$.  The crucial point is to track all strands through
the plumbing region as one braid, rather than treating the twists of the two Hopf bands
as independent full twists.  Vidussi's braid isotopy identifies the resulting curve with
$T(n,n+1)$.  His displayed convention is the mirror convention; reflecting the whole
construction gives the right-handed statement above.

For $n=3$, the strand tracking produces the four-strand braid
\[
\beta=\sigma_1\sigma_2\sigma_3(\sigma_1\sigma_2)^3.
\]
A cyclic conjugation followed by a positive Markov destabilization gives
\[
\widehat\beta
\cong
\widehat{(\sigma_1\sigma_2)^4\sigma_3}
\cong
\widehat{(\sigma_1\sigma_2)^4}
=T(3,4),
\]
which provides a direct check in the case used below.
\end{proof}

\begin{remark}
The braid calculation in the proof must be read from the whole plumbed surface.
Multiplying two formal full-twist contributions independently would instead produce the
wrong braid.  The explicit basis and Seifert matrix above remove the corresponding sign
and slope ambiguity.
\end{remark}

\section{Examples and Amicability Phenomena}\label{sec:amicability-examples}

In this section, we first prove a monotonicity property for torus-knot hosting sets and
then combine it with Proposition~\ref{prop:Tn-on-trefoil}.  We obtain a genuine amicable
pair with matching chirality and a contrasting asymmetric pair involving the figure-eight
knot.

\subsection{A preliminary inclusion for torus knots}

\begin{proposition}\label{prop:torus-inclusion}
Assume that $p,q,p',q'\ge2$ are integers satisfying
\[
\gcd(p,q)=\gcd(p',q')=1,
\qquad
p\le p',\quad q\le q'.
\]
Then
\[
S(T(p,q))\subset S(T(p',q')).
\]
\end{proposition}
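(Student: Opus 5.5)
The plan is to show that the minimal genus Seifert surface (fiber) $F_{p,q}$ of $T(p,q)$ embeds as a subsurface of the fiber $F_{p',q'}$ of $T(p',q')$, by an embedding that is the restriction of the identity of $S^3$. Then every simple closed curve on $F_{p,q}$ is literally a simple closed curve on $F_{p',q'}$ with the same knot type in $S^3$, and $F_{p',q'}$ is a minimal genus Seifert surface of $T(p',q')$. It suffices to treat the fibers: by Tsau's classification \cite{Tsau1994}, as already used in Section~\ref{sec:trefoil}, each torus knot has a unique incompressible (hence unique minimal genus) Seifert surface up to isotopy. So $S(T(p,q))$ is exactly the set of nontrivial knot types of simple closed curves on the fiber $F_{p,q}$. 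The inclusion $F_{p,q}\subset F_{p',q'}$ then gives $S(T(p,q))\subset S(T(p',q'))$ immediately.

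To construct the inclusion I will use the braid description. $T(p,q)$ is the closure of the positive braid $(\sigma_1\cdots\sigma_{p-1})^q$ on $p$ strands, and Seifert's algorithm applied to this positive braid diagram gives a surface made of $p$ disks stacked vertically, joined by $q(p-1)$ half-twisted bands: $q$ bands between each pair of consecutive disks. Since the diagram is positive (homogeneous), Cromwell's theorem \cite{Cromwell1989}, as used in Proposition~\ref{prop:trefoil-Kmn-genus}, shows this surface has minimal genus; it is the fiber. Equivalently, it is the Milnor fiber of $x^p+y^q$, which is the iterated plumbing of $(p-1)(q-1)$ positive Hopf bands arranged in a $(p-1)\times(q-1)$ grid.

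Next I reduce to the two elementary steps $(p,q)\to(p,q+1)$ and $(p,q)\to(p+1,q)$, keeping the coprimality. The grid picture is convenient here. The Seifert surface of the closed positive braid $(\sigma_1\cdots\sigma_{p'-1})^{q'}$ contains, as a subsurface, the union of the first $p$ disks and the first $q$ bands between each consecutive pair among them. After deleting the remaining disks and bands, that union is exactly the Seifert surface of $(\sigma_1\cdots\sigma_{p-1})^{q}$, up to isotopy in $S^3$. Deleting a disk attached to the rest along bands, or deleting bands, leaves a subsurface, and the ambient embedding is preserved. This handles both coordinates at once, with no intermediate coprime steps needed. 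The statement also holds for the Seifert surface of a non-coprime closure, but we only need the endpoints to be knots.

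The main obstacle is checking that the sub-braid surface really is isotopic to the standard Seifert surface of $T(p,q)$. In the band word $(\sigma_1\cdots\sigma_{p'-1})^{q'}$, the bands for $\sigma_i$ with $i\ge p$ are interleaved with the ones we keep. Removing the bands for $\sigma_p,\dots,\sigma_{p'-1}$ and then the bottom disks leaves $q'$ bands between each consecutive pair of the top $p$ disks, in the cyclic order of $(\sigma_1\cdots\sigma_{p-1})^{q'}$. Dropping the last $q'-q$ occurrences of each $\sigma_i$, for $i<p$, leaves the word $(\sigma_1\cdots\sigma_{p-1})^{q}$. This holds because deleting all bands in a final segment $(\sigma_1\cdots\sigma_{p-1})^{q'-q}$ of the word deletes whole letters, and deleting letters from a positive word corresponds to deleting bands. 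I would verify this bookkeeping carefully, and then check that the boundary of the resulting subsurface is the closure of the shortened braid, which it is, since each band contributes one crossing.
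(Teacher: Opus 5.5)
Your proof is correct, and its core idea is the same as the paper's. Both arguments show that the fiber of $T(p,q)$ is a subsurface of the fiber of $T(p',q')$ via the identity of $S^3$, so curves on the smaller surface are literally curves on the larger one. What differs is the model of the fiber.

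The paper uses Lyon's picture, in which $F_{r,s}$ is built from $r$ meridian disks of $V$ and $s$ meridian disks of $W$, smoothed at their $rs$ intersection points. Passing from $(p,q)$ to $(p',q')$ just adds disks, symmetrically in both indices. The surface $F_{p,q}$ is altered only near the finitely many boundary points of $P$, where an old disk meets a new one. Hence $F_{p,q}$ minus small neighborhoods of $P$ lies in $F_{p',q'}$, and this containment is what the argument really uses.

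You instead use the Seifert-algorithm surface of the closed positive braid. The inclusion then reduces to the fact that deleting letters from a positive word deletes bands, and minimality comes from Cromwell rather than from Lyon. Your version buys two things:
\begin{itemize}
\item it makes explicit the reduction to the fiber via uniqueness of the minimal genus Seifert surface, which the paper uses tacitly when it picks the curve $c$ on $F_{p,q}$;
\item it generalizes verbatim: $S(\widehat{\beta})\subset S(\widehat{\beta'})$ whenever a positive word $\beta$ in $\sigma_1,\dots,\sigma_{p-1}$ is obtained by deleting letters from a positive word $\beta'$ in $\sigma_1,\dots,\sigma_{p'-1}$ and both closures are knots. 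Such closures are fibered and therefore have unique minimal genus Seifert surfaces up to isotopy.
\end{itemize}
The paper's version, in turn, is symmetric in $p$ and $q$ and reuses the construction behind Theorem~\ref{thm:lyon}.

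Your first idea, reducing to unit steps while keeping coprimality, could not work in general. Going from $(2,3)$ to $(3,4)$, which is exactly the case needed in Proposition~\ref{prop:3_1-8_19-amicable}, both intermediate pairs $(3,3)$ and $(2,4)$ are non-coprime. You were right to replace it with the direct subword argument.
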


\begin{proof}
Let $T\subset S^3$ be the standard unknotted torus and write
\[
S^3=V\cup_T W,
\]
where $V$ and $W$ are solid tori.  For coprime integers $r,s\ge2$, a minimal genus
Seifert surface $F_{r,s}$ of $T(r,s)$ is obtained from $r$ meridian disks in $V$ and
$s$ meridian disks in $W$ by smoothing their intersections; see \cite{Lyon1980}.

Choose pairwise disjoint meridian disks
\[
D_1,\dots,D_{p'}\subset V,
\qquad
E_1,\dots,E_{q'}\subset W
\]
in standard position so that $F_{p,q}$ uses the first $p$ and first $q$ disks, while
$F_{p',q'}$ uses all of them.  Let
\[
P=
\bigcup_{\substack{1\le i\le p\\ q<j\le q'}}(D_i\cap E_j)
\;\cup\;
\bigcup_{\substack{p<i\le p'\\ 1\le j\le q}}(D_i\cap E_j).
\]
This is a finite subset of $F_{p,q}$.  Outside arbitrarily small pairwise disjoint disk
neighborhoods of the points of $P$, the surfaces $F_{p,q}$ and $F_{p',q'}$ coincide as
subsets of $S^3$.

Let $J\in S(T(p,q))$ and choose a representing curve $c\subset F_{p,q}$.  A small
isotopy in $F_{p,q}$ moves $c$ away from the chosen disk neighborhoods of $P$ without
changing its knot type.  The resulting curve lies in the common part of the two surfaces,
and hence also lies on $F_{p',q'}$.  Therefore $J\in S(T(p',q'))$.
\end{proof}

\begin{remark}
Mirroring Proposition~\ref{prop:torus-inclusion} gives the analogous inclusion for the
left-handed torus knots.
\end{remark}

\begin{corollary}\label{cor:small-torus-on-large-torus}
Let $p,q\ge2$ and assume $\gcd(p,q)=\gcd(p+1,q+1)=1$.  Then
\[
T(p,q)\in S(T(p+1,q+1)).
\]
\end{corollary}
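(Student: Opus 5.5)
The plan is to combine Proposition~\ref{prop:reflexive} with Proposition~\ref{prop:torus-inclusion}. First, Proposition~\ref{prop:reflexive} gives
\[
T(p,q)\in S(T(p,q)).
\]
This applies because the hypotheses $p,q\ge2$ and $\gcd(p,q)=1$ make $T(p,q)$ a non-trivial knot. Concretely, the boundary-parallel curve on the minimal genus Seifert surface $F_{p,q}$ represents $T(p,q)$.

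Next we apply Proposition~\ref{prop:torus-inclusion} with $(p',q')=(p+1,q+1)$. Its hypotheses hold:
\begin{itemize}
\item $p,q,p+1,q+1\ge2$;
\item $\gcd(p,q)=1$ and $\gcd(p+1,q+1)=1$, both assumed in the statement;
\item $p\le p+1$ and $q\le q+1$.
\end{itemize}
Hence
\[
S(T(p,q))\subset S(T(p+1,q+1)),
\]
and so $T(p,q)\in S(T(p+1,q+1))$.

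There is essentially no obstacle. The only point to note is that the inclusion in Proposition~\ref{prop:torus-inclusion} holds for the same chirality, namely the right-handed torus knots on both sides. It is not a statement about curves at fixed positions. The boundary curve of $F_{p,q}$, pushed slightly into the interior and off the finite set $P$, lies in the common part of $F_{p,q}$ and $F_{p+1,q+1}$. There it is a simple closed curve on $F_{p+1,q+1}$ and still represents $T(p,q)$ in $S^3$.

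The mirror statement for left-handed torus knots then follows from Proposition~\ref{prop:mirror-equivariance}.
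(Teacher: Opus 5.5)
Your proof is correct and is essentially the paper's own argument: Proposition~\ref{prop:reflexive} gives $T(p,q)\in S(T(p,q))$, and Proposition~\ref{prop:torus-inclusion} with $(p',q')=(p+1,q+1)$ carries this into $S(T(p+1,q+1))$. Your additional remarks, namely the non-triviality of $T(p,q)$ and the description of the pushed-off boundary curve lying on $F_{p+1,q+1}$, are consistent with the paper's proof of the inclusion but are not needed.
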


\begin{proof}
By Proposition~\ref{prop:reflexive}, $T(p,q)\in S(T(p,q))$.  Now apply
Proposition~\ref{prop:torus-inclusion}.
\end{proof}

\subsection{Amicability between \texorpdfstring{$3_1^+$ and $8_{19}^+$}{3-1+ and 8-19+}}

\begin{proposition}\label{prop:3_1-8_19-amicable}
The right-handed knots $3_1^+=T(2,3)$ and $8_{19}^+=T(3,4)$ are amicable.
Their mirror images $3_1^-$ and $8_{19}^-$ are also amicable.
\end{proposition}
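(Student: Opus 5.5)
The plan is to verify the two directions of amicability separately and then pass to the mirror images. The direction $3_1^+\to 8_{19}^+$ is already contained in Proposition~\ref{prop:Tn-on-trefoil} with $n=3$: the fiber $F^+$ of $3_1^+$ carries the essential curve $\gamma_3$ of class $3[x]-[y]$ representing $T(3,4)=8_{19}^+$. Since $F^+$ is a minimal genus Seifert surface (indeed the unique one, by Tsau's classification), this gives $8_{19}^+\in S(3_1^+)$.

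For the reverse direction $8_{19}^+\to 3_1^+$, I will apply Corollary~\ref{cor:small-torus-on-large-torus} with $(p,q)=(2,3)$. Its hypotheses hold: $\gcd(2,3)=1$ and $\gcd(3,4)=1$. The corollary then yields $T(2,3)\in S(T(3,4))$, that is, $3_1^+\in S(8_{19}^+)$. Combining both directions gives $3_1^+\leftrightarrow 8_{19}^+$ in the sense of Definition~\ref{def:hosting-amicability}. The pair is proper, since $3_1^+\neq 8_{19}^+$; for instance, their genera are $1$ and $3$.

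For the mirrors, I will apply Proposition~\ref{prop:mirror-equivariance} to each inclusion. From $8_{19}^+\in S(3_1^+)$ it gives $\overline{8_{19}^+}=8_{19}^-\in S(\overline{3_1^+})=S(3_1^-)$, which is also stated directly in Proposition~\ref{prop:Tn-on-trefoil}. Likewise, $3_1^+\in S(8_{19}^+)$ gives $3_1^-\in S(8_{19}^-)$. Hence $3_1^-\leftrightarrow 8_{19}^-$.

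There is essentially no obstacle, since all the substantive work lies in the earlier results. The only delicate point is chirality bookkeeping: the $T(3,4)$ realized on $F^+$ must be the right-handed one. This is exactly what Proposition~\ref{prop:Tn-on-trefoil} asserts, and its braid check $\widehat{(\sigma_1\sigma_2)^4}=T(3,4)$ uses positive generators. The monotonicity inclusion also stays within right-handed torus knots. So both directions involve the same chirality, and no mixed pair such as $3_1^+$ with $8_{19}^-$ arises.
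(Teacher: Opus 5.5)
\textbf{There is a genuine gap: the step $8_{19}^+\in S(3_1^+)$ is false.} Your argument is the paper's proof step for step. You use Proposition~\ref{prop:Tn-on-trefoil} for $3_1^+\to 8_{19}^+$, reflexivity plus Proposition~\ref{prop:torus-inclusion} for the converse, and Proposition~\ref{prop:mirror-equivariance} for the mirrors. The converse $3_1^+\in S(8_{19}^+)$ is sound. The problem is the first direction, exactly at the chirality point you flag as the only delicate one, so Proposition~\ref{prop:Tn-on-trefoil} cannot supply it.

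The paper's own Section~\ref{sec:trefoil} already excludes it. By Proposition~\ref{prop:trefoil-hosting-set}, every element of $S(3_1^+)$ other than $3_1^+$ is some $\overline{K^-(m,n)}$. By Baker's theorem, as used in the proof of Corollary~\ref{cor:4_1_not_in_S3_1}, each knotted $K^-(m,n)$ is a nontrivial positive braid closure. So every such element is a nontrivial negative braid closure. Its signature has the opposite sign to that of the nontrivial positive braid closure $8_{19}^+=\widehat{(\sigma_1\sigma_2)^4}$, and $8_{19}^+\neq 3_1^+$.

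\textbf{Where the chirality slips.} The slip is visible inside Proposition~\ref{prop:Tn-on-trefoil}. The diagonal entries of a Seifert matrix are the surface framings of the core curves, and a positive Hopf band has core framing $-1$. The matrix $V$ used there has $+1$ on the diagonal, so it is a Seifert matrix of the fiber of $3_1^-$. That is why the braid $\sigma_1\sigma_2\sigma_3(\sigma_1\sigma_2)^3$ comes out positive.

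\textbf{A direct check.} Draw the fiber of $3_1^+$ as a disk with two interleaved bands whose projections cross once. Each band carries a full twist in which co-oriented strands cross negatively. In a suitable basis, curves of class $a[x]-b[y]$ with $a,b>0$ also cross negatively at the $ab$ band-overlap crossings, so they have negative diagrams. The monodromy rotates the Farey graph about the triangle of unknotted slopes $[x],[y],[x]+[y]$, and carries every slope outside that triangle into this class.

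For $[x]-2[y]$ the diagram has four negative crossings, one of them nugatory, and the result is the left-handed trefoil. In general, the curve with $n$ strands through one band and one strand through the other is $\overline{T(n,n+1)}$. Thus $8_{19}^-\in S(3_1^+)$ and $8_{19}^+\in S(3_1^-)$. The proposition fails as stated, and the paper's proof fails at the same step.

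\textbf{What the ingredients do prove.} They give the mixed-chirality version. From $8_{19}^+\in S(3_1^-)$ and $3_1^-\in S(3_1^+)\subset S(8_{19}^+)$ you get $3_1^-\leftrightarrow 8_{19}^+$. Mirroring gives $3_1^+\leftrightarrow 8_{19}^-$.
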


\begin{proof}
Proposition~\ref{prop:Tn-on-trefoil} gives
\[
8_{19}^+=T(3,4)\in S(T(2,3))=S(3_1^+),
\]
so $3_1^+\to8_{19}^+$.  On the other hand,
Proposition~\ref{prop:torus-inclusion} gives
\[
S(T(2,3))\subset S(T(3,4)).
\]
Since $T(2,3)\in S(T(2,3))$, it follows that
\[
3_1^+=T(2,3)\in S(T(3,4))=S(8_{19}^+).
\]
Thus $8_{19}^+\to3_1^+$, and the right-handed pair is amicable.
The statement for the left-handed pair follows from
Proposition~\ref{prop:mirror-equivariance}.
\end{proof}

\subsection{A non-amicable example: the trefoil and the figure-eight knot}

\begin{proposition}\label{prop:3_1-in-S4_1}
Both chiralities of the trefoil lie in $S(4_1)$.
\end{proposition}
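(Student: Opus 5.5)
The plan is to work on the fiber surface $G$ of the figure-eight knot. Since $4_1$ is fibered of genus one, its minimal genus Seifert surface is unique up to isotopy; we only need one surface anyway, so it suffices to exhibit curves on the standard fiber. $G$ is the plumbing of a positive Hopf band $A_+$ and a negative Hopf band $A_-$. Let $x$ and $y$ be their oriented cores, chosen so that the Seifert matrix in the convention of Section~\ref{sec:trefoil} is
\[
V=\begin{pmatrix}1&0\\-1&-1\end{pmatrix}.
\]
The cores $x$ and $y$ themselves are unknots, since each is the core of an unknotted annulus. We therefore look at the next simplest essential curves, those in the classes $[x]\pm[y]$.

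\emph{Key step.} Let $c$ be the curve obtained by band-summing $x$ and $y$ across the plumbing square; it is the simple closed curve representing $[x]+[y]$ or $[x]-[y]$, depending on the resolution. We identify its knot type directly in a disk--band picture. In the standard picture of $G$ as a disk with two twisted bands, with $A_+$ carrying a full positive twist and $A_-$ a full negative twist, a curve running once through each band is the connected sum of the two core circles only after we account for the linking contributed by the plumbing. This is the point to track carefully. The self-linking (framing) of $c$ is $\operatorname{lk}(c,c^+)=v^{\mathsf T}Vv$, where $v=(1,\pm1)^{\mathsf T}$. This gives $1-1\mp1$, so one choice of sign has framing $\pm1$. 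This framing is the twisting that the annulus neighborhood of $c$ receives, but the knot type itself must be read from the diagram.

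A cleaner route avoids that reading: use the monodromy. We have $h_*=V^{-1}V^{\mathsf T}$, which for $4_1$ is hyperbolic, $\begin{pmatrix}2&1\\1&1\end{pmatrix}$ up to sign conventions. A more robust approach is to realize $4_1$ as a \emph{genus-one double twist knot} and view the trefoils inside it. The fiber $G$ contains a curve $c$ whose neighborhood annulus $A_c$ is itself a twisted annulus in $S^3$. Cutting $G$ along $c$ and regarding $G$ as $A_c$ plumbed with another annulus, $c$ is the core of $A_c$. I would choose the basis of $H_1(G)$ adapted to $c$ and a dual curve $d$, and compute the Seifert matrix $\begin{pmatrix}a&b\\b-1&d'\end{pmatrix}$. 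I would then draw $G$ as the standard disk with two bands whose cores are $c$ and $d$; the bands may be knotted. For the right choice of $c$ (class $[x]+2[y]$ or similar), the band of $c$ visibly follows a trefoil pattern in the standard diagram of $4_1$ (two clasps and two crossings). I expect this curve to be drawn explicitly on a figure, giving a right-handed trefoil. A direct diagram check, reducing crossings to the standard $3$-crossing diagram, confirms the type.

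Having found $3_1^+\in S(4_1)$ via one curve, we conclude $3_1^-\in S(4_1)$ from Proposition~\ref{prop:mirror-equivariance} and the amphichirality of $4_1$: $S(4_1)=S(\overline{4_1})=\{\overline J\mid J\in S(4_1)\}$.

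The main obstacle is the explicit identification of the trefoil curve on the fiber: choosing the slope and verifying the knot type diagrammatically. As a check, note that genus alone does not suffice, since for $4_1$ Baker-type formulas are not positive. I would instead verify using the Alexander polynomial, computed from the Seifert matrix of the once-punctured torus neighborhood of $c$ together with how $c$ bounds in $S^3$. I would combine this with an explicit Reidemeister reduction of the drawn curve, tracking all strands through the plumbing square as in Proposition~\ref{prop:Tn-on-trefoil}.
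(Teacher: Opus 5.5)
\textbf{There is a genuine gap.} Your outer structure matches the paper's, and the final mirror step is correct: find one trefoil on the fiber of $4_1$, then get the other chirality from Proposition~\ref{prop:mirror-equivariance} and the amphichirality of $4_1$. But the only substantive step is never carried out: exhibiting a specific simple closed curve on the fiber and proving that it is a trefoil. You name the candidate only as ``$[x]+2[y]$ or similar'' and say you \emph{expect} a picture to confirm it. None of the computations you propose can replace that picture.
\begin{itemize}
\item The surface framing $v^{\mathsf T}Vv$ is not a knot invariant of $c$, as you note yourself.
\item The Alexander-polynomial check is not available as described. A regular neighbourhood of $c$ in the fiber is an annulus, and the fiber is not a Seifert surface for $c$, so computing $\Delta_c$ already requires the missing diagram.
\end{itemize}
The hedge ``or similar'' also hides exactly what has to be decided. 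With your $V$ the monodromy is $h_*=V^{-1}V^{\mathsf T}=\begin{pmatrix}1&-1\\-1&2\end{pmatrix}$; the matrix you wrote is its inverse. Hence
\[
h_*[x]=[x]-[y],\qquad h_*^{-1}[y]=[x]+[y],\qquad h_*[y]=-[x]+2[y].
\]
A curve on the fiber is isotopic in $S^3$ to its image under the monodromy, by flowing along the fibration. So the curves of classes $[x]\pm[y]$ and $[x]-2[y]$ are all unknotted, like the cores. Your first detour through $[x]\pm[y]$ therefore cannot produce a trefoil, and of the two curves meeting the bands with weights $(1,2)$, one is an unknot.

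\textbf{What is needed} is an explicit diagram identification. The paper supplies it by taking Yamada's slope-$(1,2)$ curve $k^{-,+}(1,2)$ and identifying it, via the disk--band picture or Baker's train-track isotopy, with the closure of $\sigma_1^3$. In your notation you can do it directly.
\begin{itemize}
\item Draw the fiber as a disk with two interleaved unknotted bands. Realize the framings $V_{11}=1$ and $V_{22}=-1$ by one full twist each, with one band passing once over the other.
\item A curve running once through the $x$-band and twice, in the same direction, through the $y$-band has a diagram with four crossings: two where the $x$-strand crosses the two $y$-strands, and two from the full twist of the $y$-band.
\item One of the first two crossings is a Reidemeister~I kink.
\item For one of the classes $[x]\pm2[y]$ the remaining three crossings alternate. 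This gives a reduced alternating $3$-crossing diagram, hence a trefoil. For the other class they do not alternate and the curve is the unknot; by the monodromy computation above, that class must be $[x]-2[y]$.
\item Since the two $y$-strands are parallel, the twist crossings have the sign of $V_{22}=-1$. So all three crossings of the trefoil $[x]+2[y]$ are negative, and it is the \emph{left}-handed trefoil, not the right-handed one you predicted. The right-handed one is $2[x]-[y]$, by the same argument with the bands exchanged, using $h_*^{-1}[x]=2[x]+[y]$.
\end{itemize}
This chirality slip is harmless for the proposition because of your mirror step. It does show why the curve has to be drawn and identified rather than guessed.
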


\begin{proof}
Use Yamada's figure-eight fiber $F^{-,+}=D\cup b_L\cup b_R$.
Let
\[
\alpha=c_L,\qquad \beta=c_R
\]
be the clockwise-oriented core circles of the two bands, as in
\cite[Definition~2.1 and Figure~3]{Yamada2010}.  Thus the slope $(p,q)$ means the
primitive class $p[\alpha]+q[\beta]$.

The slope $(1,2)$ curve is Yamada's $k^{-,+}(1,2)$.  Tracking this curve in the
disk--band diagram, equivalently specializing Baker's figure-eight train-track isotopy to
weights $(1,2)$ \cite[Figure~B.9]{Baker2004}, gives the closure of the two-strand braid
$\sigma_1^3$.  Hence it is the right-handed trefoil $3_1^+$, and
\[
3_1^+\in S(4_1).
\]
Mirroring the entire picture replaces $F^{-,+}$ by $F^{+,-}$ and gives the left-handed
trefoil.  Since $4_1$ is amphichiral, this also proves $3_1^-\in S(4_1)$.
\end{proof}

\begin{corollary}\label{cor:3_1-4_1-not-amicable}
Neither $3_1^+$ nor $3_1^-$ is amicable with $4_1$.
\end{corollary}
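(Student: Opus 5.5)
The statement follows by unwinding the definitions of amicability from Definition~\ref{def:hosting-amicability} and combining two results already established. Amicability $3_1^\pm\leftrightarrow 4_1$ requires both $4_1\in S(3_1^\pm)$ and $3_1^\pm\in S(4_1)$. It therefore suffices to show that the first membership fails for each chirality.

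The key input is Corollary~\ref{cor:4_1_not_in_S3_1}, which gives $4_1\notin S(3_1^+)$ and $4_1\notin S(3_1^-)$. Recall that its argument runs as follows. The unique minimal genus Seifert surface of the trefoil (Tsau) is the fiber. Every essential, non-boundary-parallel curve on it is a positive braid closure (Baker) and so has nonzero signature (Rudolph), whereas $\sigma(4_1)=0$. A boundary-parallel curve gives the trefoil itself. Mirror equivariance (Proposition~\ref{prop:mirror-equivariance}) together with the amphichirality of $4_1$ handles the other chirality. Hence $3_1^\pm\nrightarrow 4_1$, and so $3_1^\pm\not\leftrightarrow 4_1$.

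I will also record, using Proposition~\ref{prop:3_1-in-S4_1}, that the reverse relation $4_1\to 3_1^\pm$ does hold. This is not needed for non-amicability, but it shows that the relation $\to$ is genuinely asymmetric, which is the point of the example.

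There is essentially no obstacle at this stage, since all the substance lies in Corollary~\ref{cor:4_1_not_in_S3_1}. Within that corollary, the step to double-check is that the \emph{unoriented} knot types in the hosting-set description of Proposition~\ref{prop:trefoil-hosting-set} exhaust all curves, inessential ones included. This holds because inessential curves bound disks on the fiber and are therefore unknots, which $S(\cdot)$ excludes by definition.
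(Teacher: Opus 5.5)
Your proposal is correct and matches the paper's proof, which likewise gets the failure of $4_1\in S(3_1^{\pm})$ from Corollary~\ref{cor:4_1_not_in_S3_1}. The paper also cites Proposition~\ref{prop:3_1-in-S4_1} for the reverse direction, but, as you note, that is needed only to exhibit the asymmetry, not to prove non-amicability.
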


\begin{proof}
Proposition~\ref{prop:3_1-in-S4_1} gives realizability from $4_1$ to either trefoil.
Corollary~\ref{cor:4_1_not_in_S3_1} gives the failure in the reverse direction.
\end{proof}

\begin{figure}[b]
\centering
\begin{tikzpicture}[scale=1.2]
  \node[knot] (a) at (0,0) {$3_1^+$};
  \node[knot] (b) at (3,1.2) {$8_{19}^+$};
  \node[knot] (c) at (3,-1.2) {$4_1$};

  \draw[<->,thick] (a) -- node[above,info,yshift=2.0mm] {amicable} (b);
  \draw[->,thick] (c) -- node[right,info,yshift=1.5mm] {$3_1^+\in S(4_1)$} (a);
  \draw[thick,dashed] (a) -- node[below left,info] {$4_1\notin S(3_1^+)$} (c);
\end{tikzpicture}
\caption{The right-handed knots $3_1^+$ and $8_{19}^+$ are amicable, whereas
$3_1^+$ and $4_1$ are not.  Mirroring gives the corresponding left-handed statements.}
\label{fig:amicable-examples}
\end{figure}

\section{Final Questions}\label{sec:final-questions}

The examples above suggest that the symmetric relation of amicability deserves further study.
We conclude by returning to Questions~\ref{que:proper-amicable}
and~\ref{que:amicability-connected}.  The first asks whether every knot has a
proper amicable knot, while the second asks whether the amicability graph is
connected.

\begin{remark}
Corollary~\ref{cor:stabilized-amicability} shows that the connected-sum operation interacts
naturally with the amicability graph: any edge \(K\!-\!J\) gives rise to edges
\[
K\#L \;-\; J\#L
\]
for all knots \(L\).
Thus connected sum supplies a systematic way of constructing large families of vertices in the
same connected component once one amicable pair is known.
At the same time, this stabilization phenomenon does not obviously connect unrelated components,
so Question~\ref{que:amicability-connected} remains open.
\end{remark}

\end{document}